\pgfplotsset{soldot/.style={color=black,only marks,mark=*}} \pgfplotsset{holdot/.style={color=black,fill=white,only marks,mark=*}}
\newtheorem{thm}{Theorem}[section]
\newtheorem{prop}[thm]{Proposition}
\newtheorem{cor}[thm]{Corollary}
\newtheorem{lem}[thm]{Lemma}
\theoremstyle{definition}
\theoremstyle{remark}
\newtheorem{example}[thm]{Example}
\newtheorem{remark}[thm]{Remark}
\newtheorem{remarks}[thm]{Remarks}
\newtheorem{remexs}[thm]{Remarks and Examples}
\newcounter{substep}
\def\thesubstep{\arabic{substep}}
\newcounter{subsubstep}
\def\thesubsubstep{\arabic{subsubstep}}
\newcommand{\K}{{\mathbb K}} \newcommand{\N}{{\mathbb N}}
 \newcommand{\R}{{\mathbb R}}
 \newcommand{\C}{{\mathbb C}}
\newcommand{\sph}{{\mathbb S}} 
\newcommand{\E}{{\mathbb E}} \newcommand{\PP}{{\mathbb P}}
\newcommand{\Bb}{{\EuScript B}}
\newcommand{\Sing}{\operatorname{Sing}}
\newcommand{\cl}{\operatorname{Cl}}
\newcommand{\id}{\operatorname{id}}
\newcommand{\im}{\operatorname{im}}
\newcommand{\zar}{\operatorname{zar}}
\newcommand{\x}{{\tt x}} \newcommand{\y}{{\tt y}}
\newcommand{\z}{{\tt z}} \renewcommand{\t}{{\tt t}}
\newcommand{\ol}{\overline}
\newcommand{\veps}{\varepsilon}
\numberwithin{equation}{section}
\title[On the one dimensional polynomial, regular and regulous images]{On the one dimensional polynomial, regular and regulous images of closed balls and spheres}
\author{Jos\'e F. Fernando}
\address{Departamento de \'Algebra, Facultad de Ciencias Matem\'aticas, Universidad Complutense de Madrid, 28040 MADRID (SPAIN)}
\email{josefer@mat.ucm.es}
\thanks{The author is supported by Spanish STRANO PID2021-122752NB-I00.}
\begin{document}

\begin{abstract}
We present a full geometric characterization of the $1$-dimensional (semi\-algebraic) images $S$ of either $n$-dimensional closed balls $\ol{\Bb}_n\subset\R^n$ or $n$-dimensional spheres $\sph^n\subset\R^{n+1}$ under polynomial, regular and regulous maps for some $n\geq1$. In all the previous cases one can find a new polynomial, regular or regulous map with domain either $\ol{\Bb}_1:=[-1,1]$ or $\sph^1$ such that $S$ is the image under such map of either $\ol{\Bb}_1:=[-1,1]$ or $\sph^1$. As a byproduct, we provide a full characterization of the images of $\sph^1\subset\C\equiv\R^2$ under Laurent polynomials $f\in\C[\z,\z^{-1}]$, taking advantage of some previous works of Kovalev-Yang and Wilmshurst. We also alternatively prove that all polynomial maps $\sph^k\to\sph^1$ are constant if $k\geq2$.
\end{abstract}

\date{11/05/2025}
\subjclass[2020]{Primary: 14P10, 26C05, 26C15; Secondary: 14P05, 14P25, 42A05}
\keywords{Polynomial, regular, regulous map and image, closed ball, sphere, rational curve, Zariski closure, normalization, irreducibility, compactness, points at infinite.}
\maketitle

\section{Introduction}\label{s1}
A map $f:=(f_1,\ldots,f_m):\R^n\to\R^m$ is a {\em polynomial map} if each of its components $f_i\in\R[\x]:=\R[\x_1,\ldots,\x_n]$ is a polynomial. Let $T\subset\R^n$ and $S\subset\R^m$. We say that $S$ is a {\em polynomial image of $T$} if there exists a polynomial map $f:\R^n\to\R^m$ such that $S=f(T)$. A rational map $f:=(f_1,\ldots,f_m):\R^n\dasharrow\R^m$ is a {\em regular map on $T\subset\R^n$} if each component $f_i\in\R(\x_1,\ldots,\x_n)$ is a rational function, that is, each $f_i:=\frac{g_i}{h_{i}}$ is a quotient of polynomials, and the zero set of $h_{i}$ does not meet $T$. A subset $S$ of $\R^m$ is a {\em regular image of} $T$ if $S=f(T)$ for some rational map $f:\R^n\dasharrow\R^m$ that is regular on $T$. A rational map $f:=(f_1,\ldots,f_m):\R^n\dasharrow\R^m$ is a {\em regulous map on $T\subset\R^n$} if it extends to a continuous function on $T$ and the complement of the set of poles of $f$ meets $T$ in a dense subset of $T$. A subset $S$ of $\R^m$ is a {\em regulous image of} $T$ if $S=f(T)$ for some rational map $f:\R^n\dasharrow\R^m$ that is regulous on $T$. 

A {\em semialgebraic subset} $S$ of $\R^n$ is a set that admits a description as a finite boolean combination of polynomial equalities and inequalities. By \em elimination of quantifiers \em $S$ is semialgebraic if it has a description by a first order formula \em possibly with quantifiers\em. Such a freedom provides semialgebraic descriptions for topological operations. For instance: interiors, closures, borders, connected components of semialgebraic sets are again semialgebraic sets. A map $f:T\to S$ between two semialgebraic sets $T\subset\R^n$ and $S\subset\R^m$ is {\em semialgebraic} if its graph is a semialgebraic set.

A celebrated Theorem of Tarski-Seidenberg \cite[Thm.1.4]{bcr} states that the image of a semialgebraic set $T\subset\R^n$ under a semialgebraic map $f:T\rightarrow\R^m$ (which include the case of polynomial, regular and regulous maps on $T$) is a semialgebraic subset $S$ of $\R^m$. In an \em Oberwolfach \em week \cite{g} Gamboa proposed to characterize the (semialgebraic) sets of $\R^m$ that are polynomial images of $\R^n$ for some $n\geq1$. During the last 25 years we have approached this problem and obtained several results in two directions:
\begin{itemize}
\item{\em General properties.} We have found conditions \cite{fe1,fg2,fu1,u1} that a semialgebraic subset must satisfy to be either a polynomial, regular or regulous image of $\R^m$. The most remarkable one states that the set of points at infinity of a polynomial image of $\R^m$ is connected \cite{fu1}. The $1$-dimensional polynomial images of $\R^n$ were fully described in \cite{fe1}. In \cite[Thm.17]{ffqu} we proved the equality between the family of regular images of $\R^2$ and the family of regulous images of $\R^2$. 
\item{\em Representation of semialgebraic sets as polynomial or regular images of $\R^n$.} We have performed constructions to represent as either polynomial or regular images of $\R^n$ semialgebraic sets that can be described by linear equalities and inequalities. In \cite{fe1,fg1,fgu1,fgu2,fgu3,fgu4,fgu5,fu2,fu3,fu4,fu5,u2} we have analyzed the cases of convex polyhedra and their interiors, together with their respective complements and we have provided a full answer \cite[Table 1]{fu5}.
\end{itemize}

In \cite{kps} Kubjas-Parrillo-Sturmfels proposed to describe explicitely the two dimensional images of $\ol{\Bb}_3$ under a polynomial image $f:\R^3\to\R^2$. We have generalized the previous problem and proposed in \cite{fu6} to determine the semialgebraic subsets of $\R^m$ that are images of either an $n$-dimensional closed ball $\ol{\Bb}_n\subset\R^n$ (of center the origin and radius $1$) or an $n$-dimensional sphere $\sph^n\subset\R^{n+1}$ (of center the origin and radius $1$) mainly under polynomial maps (but also under regular maps). We have obtained full information for the case of unions of finitely many convex polyhedra that provide semialgebraic sets connected by analytic paths \cite{fe3,fu6}. We have also treated in \cite{fu6} more demanding cases, but we feel far from obtaining a full answer for semialgebraic sets of arbitrary dimension.

The class of semialgebraic maps with more tools to attack this type of problems is the Nash category. This is the only case for which we have a full geometric characterization for the images under Nash maps of affine spaces, closed balls and spheres \cite{fe2,cf1,cf2}. Recall here that a \em Nash function \em on an open semialgebraic subset $U\subset\R^m$ is an analytic function on $U$ that satisfies a non-trivial polynomial equation, that is, there exists a non-zero $P\in\R[\x,\y]$ such that $P(x,f(x))=0$ for all $x\in U$. If $S\subset\R^m$ is a semialgebraic set, the ring ${\mathcal N}(S)$ of {\em Nash functions on $S$} is the collection of all functions on $S$ that admits a Nash extension to an open semialgebraic neighborhood $U$ of $S$ in $\R^m$ and it is endowed with the usual sum and product (for further details see \cite{fg3}).

The interest of polynomial, regular, regulous or Nash images of affine spaces, closed balls or spheres arises because there are several problems in Real Algebraic Geometry that for such images can be reduced to analyze them on the corresponding models: affine spaces, closed balls or spheres \cite{fg1,fg2,fgu1,fu2}. Examples of such problems are:
\begin{itemize}
\item optimization of polynomial, regular, regulous or Nash functions on $S$ (see also \cite{th}), 
\item characterization of the polynomial, regular, regulous or Nash functions that are positive semidefinite on $S$ (Hilbert's 17th problem and Positivstellensatz).
\item Constructing Nash paths on semialgebraic sets connected by analytic paths \cite{fe3}.
\end{itemize}

\subsection{Invariants}
Consider the families of models of semialgebraic sets: ${\mathfrak A}:=\{\R^n:\ n\geq1\}$ (affine spaces), ${\mathfrak B}:=\{\ol{\Bb}_n:\ n\geq1\}$ (closed balls) and ${\mathfrak S}:=\{\sph_n:\ n\geq1\}$ (spheres). A semialgebraic set $S\subset\R^m$ is a polynomial image of an affine space (resp. a closed ball or a sphere) if there exist an element $\R^n\in{\mathfrak A}$ (resp. $\ol{\Bb}_n\in{\mathfrak B}$ or $\sph^{n-1}\in{\mathfrak S}$) and a polynomial map defined on $\R^n$ such that $f(\R^n)=S$ (resp. $f(\ol{\Bb}_n)=S$ or $f(\sph^{n-1})=S$). The same definitions can be proposed for regular, regulous and Nash maps in the obvious way.

Let ${\mathfrak E}$ be either ${\mathfrak A}$, ${\mathfrak B}$ or ${\mathfrak S}$ and define for a set $S\subset\R^m$ the following invariants:
\begin{align*}
{\rm p}_{{\mathfrak E}}(S)&:=\inf\{p\geq 1:\ \text{$S$ is a polynomial image of $E\in{\mathfrak E}$ and $\dim(E)=p$}\},\\
{\rm r}_{{\mathfrak E}}(S)&:=\inf\{r\geq 1:\ \text{$S$ is a regular image of $E\in{\mathfrak E}$ and $\dim(E)=r$}\},\\
{\rm rs}_{{\mathfrak E}}(S)&:=\inf\{r\geq 1:\ \text{$S$ is a regulous image of $E\in{\mathfrak E}$ and $\dim(E)=r$}\},\\
{\rm n}_{{\mathfrak E}}(S)&:=\inf\{n\geq 1:\ \text{$S$ is a Nash image of $E\in{\mathfrak E}$ and $\dim(E)=n$}\}.
\end{align*}
In case a subset $A\subset\N$ is empty, we write $\inf(A):=+\infty$. If one of the previous invariant values $+\infty$, then $S$ is not an image of the corresponding type of semialgebraic maps. We have the following initial inequalities:
$$
\max\{{\rm rs}_{{\mathfrak E}}(S),{\rm n}_{{\mathfrak E}}(S)\}\leq{\rm r}_{{\mathfrak E}}(S)\leq{\rm p}_{{\mathfrak E}}(S)
$$
for each $S\subset\R^m$ and each ${\mathfrak E}\in\{{\mathfrak A},{\mathfrak B},{\mathfrak S}\}$. If any of the previous invariants is finite, then $S$ is by Tarski-Seidenberg Theorem \cite[Thm.1.4]{bcr} a semialgebraic set and by \cite[Thm.2.8.8]{bcr} the dimension $\dim(S)$ of $S$ is less than or equal to any of them. 

The closed ball $\ol{\Bb}_n$ is the projection of the sphere $\sph^n$ and $\sph^n$ is a regular image of $\ol{\Bb}_n$ (see \cite[Cor.2.9 \& Lem.A.4]{fu6}). In Example \ref{s1rp1} we recall an explicit regular map $f:\R\to\R^2$ such that $f(\ol{\Bb}_1)=\sph^1$. In addition, the closed ball $\ol{\Bb}_n$ (and consequently the sphere $\sph^n$) is a regular image of $\R^n$ by \cite[Lem.3.1]{fe1} and \cite[Cor.2.9 \& Lem.2.10]{fu6}. Obviously, as both $\ol{\Bb}_n$ and $\sph^n$ are compact sets, $\R^n$ is neither a polynomial, regular, regulous nor a Nash image of either $\ol{\Bb}_n$ or $\sph^n$. In Lemma \ref{boundedk} we show that the image of a compact subset of $\R^n$ with non-empty interior under a polynomial map cannot be a compact algebraic subset of $\R^m$ of dimension $\geq1$. In particular, a sphere $\sph^m$ cannot be the image under a polynomial map of any closed ball $\ol{\Bb}_n$. In addition, polynomial images of $\R^n$ are either unbounded or a singleton \cite[Rem.1.3(3)]{fg1}. We deduce the following extra relations between the invariants: 
$$
\begin{cases}
{\rm r}_{\mathfrak A}(S)\leq{\rm r}_{\mathfrak B}(S)={\rm r}_{\mathfrak S}(S),\\
{\rm rs}_{\mathfrak A}(S)\leq{\rm rs}_{\mathfrak B}(S)={\rm rs}_{\mathfrak S}(S),\\
{\rm rs}_{\mathfrak A}(S)\leq2\ \Longrightarrow\ {\rm rs}_{\mathfrak A}(S)={\rm r}_{\mathfrak A}(S)\ \text{by \cite[Thm.1.7]{ffqu} (see Remark \ref{nor1p2}(ii) below)},\\
{\rm p}_{\mathfrak S}(S)\leq{\rm p}_{\mathfrak B}(S),\\
{\rm p}_{\mathfrak S}(S)<+\infty\ \text{or}\ {\rm p}_{\mathfrak B}(S)<+\infty,\ \text{and}\ \text{$S$ is not a singleton}\Longrightarrow\ {\rm p}_{\mathfrak A}(S)=+\infty,\\
{\rm p}_{\mathfrak A}(S)<+\infty\ \text{and}\ \text{$S$ is not a singleton} \Longrightarrow\ {\rm p}_{\mathfrak B}(S)=+\infty,\ {\rm p}_{\mathfrak S}(S)=+\infty.
\end{cases}
$$ 
The invariants ${\rm n}_{{\mathfrak A}}(S)$, ${\rm n}_{{\mathfrak B}}(S)$ and ${\rm n}_{{\mathfrak S}}(S)$ only take the values $\dim(S)$ (if $S$ is a Nash image) or $+\infty$ (if $S$ is not a Nash image) and have been computed in \cite{fe1} and \cite{cf1,cf2} for each semialgebraic set $S\subset\R^m$. It holds $\dim(S)\leq{\rm n}_{{\mathfrak A}}(S)\leq{\rm n}_{{\mathfrak B}}(S)={\rm n}_{{\mathfrak S}}(S)$ for each semialgebraic set $S\subset\R^m$.

As we have already pointed out in \cite{fg1}, there are some straightforward properties that a regular image $S\subset\R^m$ must satisfy: {\em it has to be pure dimensional, connected, semialgebraic and its Zariski closure has to be irreducible}. Furthermore, $S$ must be by \cite[(3.1) (iv)]{fg3} \em irreducible \em in the sense that its ring ${\mathcal N}(S)$ of Nash functions on $S$ is an integral domain.

{\Small\begin{table}[!ht]
\begin{center}
{\setlength{\arrayrulewidth}{.5pt}
\renewcommand*{\arraystretch}{1.5}
\begin{tabular}{|c|c|c|c|c|c|c|c|c|}
\hline
$S$&$\R$ or $[0,+\infty)$&$(0,+\infty)$&$[0,1)$&$(0,1)$&$[0,1]$&$\sph^1$&Parabola&Non-rational curves\\
\hline
\hline
${\rm p}_{\mathfrak A}(S)$&$1$&$2$&$+\infty$&$+\infty$&$+\infty$&$+\infty$&$1$&$+\infty$\\
\hline
${\rm r}_{\mathfrak A}(S)$&$1$&$2$&$1$&$2$&$1$&$1$&$1$&$+\infty$\\
\hline
${\rm rs}_{\mathfrak A}(S)$&$1$&$2$&$1$&$2$&$1$&$1$&$1$&$+\infty$\\
\hline
${\rm n}_{\mathfrak A}(S)$&$1$&$1$&$1$&$1$&$1$&$1$&$1$&$1$ (I) or $+\infty$ (O)\\
\hline
\hline
${\rm p}_{\mathfrak B}(S)$&$+\infty$&$+\infty$&$+\infty$&$+\infty$&$1$&$+\infty$&$+\infty$&$+\infty$\\
\hline
${\rm r}_{\mathfrak B}(S)$&$+\infty$&$+\infty$&$+\infty$&$+\infty$&$1$&$1$&$+\infty$&$+\infty$\\
\hline
${\rm rs}_{\mathfrak B}(S)$&$+\infty$&$+\infty$&$+\infty$&$+\infty$&$1$&$1$&$+\infty$&$+\infty$\\
\hline
${\rm n}_{\mathfrak B}(S)$&$+\infty$&$+\infty$&$+\infty$&$+\infty$&$1$&$1$&$+\infty$&$1$ (C, I) or $+\infty$ (O)\\
\hline
\hline
${\rm p}_{\mathfrak S}(S)$&$+\infty$&$+\infty$&$+\infty$&$+\infty$&$1$&$1$&$+\infty$&$+\infty$\\
\hline
${\rm r}_{\mathfrak S}(S)$&$+\infty$&$+\infty$&$+\infty$&$+\infty$&$1$&$1$&$+\infty$&$+\infty$\\
\hline
${\rm rs}_{\mathfrak S}(S)$&$+\infty$&$+\infty$&$+\infty$&$+\infty$&$1$&$1$&$+\infty$&$+\infty$\\
\hline
${\rm n}_{\mathfrak S}(S)$&$+\infty$&$+\infty$&$+\infty$&$+\infty$&$1$&$1$&$+\infty$&$1$ (C, I) or $+\infty$ (O)\\
\hline
\end{tabular}}
\end{center}
\caption{Notations: C:=Compact, I:=Irreducible, O:=Otherwise\label{table1}}
\end{table}}

\subsection{The one dimensional case.}
In this work we focus our attention on the one dimensional case and present a full geometric characterization of the polynomial, regular and regulous one dimensional images of closed balls and spheres. In fact, we compute the exact value of the invariants ${\rm p}_{\mathfrak E}$, ${\rm r}_{\mathfrak E}$ and ${\rm rs}_{\mathfrak E}$ for all of them, where ${\mathfrak E}={\mathfrak B},{\mathfrak S}$. We will see in this work that in the one dimensional case the only possible values for the invariants ${\rm p}_{\mathfrak E}$, ${\rm r}_{\mathfrak E}$ and ${\rm rs}_{\mathfrak E}$ (where ${\mathfrak E}={\mathfrak B},{\mathfrak S}$) are either $1$ or $+\infty$. In {\sc Table} \ref{table1} we illustrate the situation with several examples and compare the invariants ${\rm p}_{\mathfrak E}$, ${\rm r}_{\mathfrak E}$ and ${\rm rs}_{\mathfrak E}$ (where ${\mathfrak E}={\mathfrak B},{\mathfrak S}$) with the invariants ${\rm p}_{\mathfrak A}$, ${\rm r}_{\mathfrak A}$, ${\rm rs}_{\mathfrak A}$ and ${\mathfrak n}_{\mathfrak E}$ (where ${\mathfrak E}={\mathfrak A},{\mathfrak B},{\mathfrak S}$), which were mainly computed in \cite{cf1,cf2,fe1,fe2,ffqu}. 

\subsection{Notations and terminology}
Before stating our main results whose proofs are developed in Section \ref{s3}, after the preparatory work of Section \ref{s2}, we recall some preliminary standard notations and terminology. We write $\K$ to refer indistinctly to $\R$ or $\C$ and denote the hyperplane at infinity of the projective space $\K\PP^n$ with $\mathsf{H}^n_\infty(\K):=\{\x_0=0\}$. The projective space $\K\PP^n$ contains $\K^n$ as the set $\K\PP^n\setminus\mathsf{H}^n_\infty(\K)=\{\x_0=1\}$. If $n=1$, the point of infinity of the projective line $\K\PP^1$ is ${[0:1]}$.

For each $n\geq1$ denote the complex conjugation with
$$
\sigma_n:\C\PP^n\to\C\PP^n,\ z:=[z_0:z_1:\cdots:z_n]\mapsto\ol{z}:=[\ol{z_0}:\ol{z_1}:\cdots:\ol{z_n}].
$$
Clearly, $\R\PP^n$ is the set of fixed points of $\sigma_n$. A set $A\subset\C\PP^n$ is called \em invariant \em if $\sigma_n(A)=A$. It is well-known that if $Z\subset\C\PP^n$ is an invariant non-singular (complex) projective variety, then $Z\cap\R\PP^n$ is a non-singular (real) projective variety. We also say that a rational map $h:\C\PP^n\dashrightarrow\C\PP^m$ is \em invariant \em if $h\circ\sigma_n=\sigma_m\circ h$. Of course, $h$ is invariant if its components can be chosen as homogeneous polynomials with real coefficients, so it provides by restriction a real rational map $h|_{\R\PP^n}:\R\PP^n\dashrightarrow\R\PP^m$.

Given a semialgebraic set $S\subset\R^m\subset\R\PP^m\subset\C\PP^m$, we denote its Zariski closure in $\K\PP^m$ with $\cl_{\K\PP^m}^{\zar}(S)$. Obviously, $\cl_{\C\PP^m}^{\zar}(S)\cap\R\PP^m=\cl_{\R\PP^m}^{\zar}(S)$ and $\cl^{\zar}(S)=\cl_{\R\PP^m}^{\zar}(S)\cap\R^m$ is the \em Zariski closure of $S$ in $\R^m$\em. Observe that $\cl_{\C\PP^m}^{\zar}(S)$ is an invariant algebraic set. In addition, $\cl_{\K\PP^m}(S)$ denotes the closure of $S$ in $\K\PP^m$ with respect to the quotient topology of $\K\PP^m$ induced by the canonical map $\pi:\K^{m+1}\setminus\{0\}\to\K\PP^m,\ x:=(x_0,x_1,\ldots,x_n)\mapsto[x]:=[x_0:x_1:\cdots:x_n]$. We endow the linear space $\K^{m+1}$ for $\K=\R$ or $\C$ with the Euclidean topology (in the case $\K=\R$ it is induced by the Euclidean norm, whereas in the case $\K=\C$ it is induced by the norm associated to its usual Hermitian inner product). The projective spaces $\K\PP^m$ (endowed with the previous topology) can be embedded as real algebraic submanifolds of $\R^M$ for some positive integer $M$ large enough \cite[\S.3.4.1 \& Prop.3.4.6]{bcr}.

A \em complex rational curve \em is the image of $\C\PP^1$ under a birational map, which is by \cite[Prop.(7.1)]{m1} in addition regular, because $\C\PP^1$ does not have singular points (see also Lemma \ref{curve}). We denote the {\em set of points of} the semialgebraic set {$S$ that have local dimension $k$} with $S_{(k)}$, which is a semialgebraic subset of $S$. If $k=\dim(S)$, then $S_{(k)}$ is in addition closed in $S$. A \em real rational curve \em is a real projective irreducible curve $C$ such that $C_{(1)}$ is the image of $\R\PP^1$ under a birational map, which by Lemma \ref{curve} is in addition a regular map.

We also deal with the {\em irreducibility of analytic set germs}. A set germ $X_p$ of $\K\PP^n_p$ is {\em analytic} if there exist finitely many analytic functions $f_1,\ldots,f_s$ on a neighborhood $U$ of $p$ (for instance, polynomial or regular on $\{p\}$) such that $X_p$ is the set germ at $p$ of the common zero set of $f_1,\ldots,f_s$. An analytic set germ $X_p$ is {\em reducible} if there exist analytic set germs $X_{1,p}$ and $X_{2,p}$ such that $X_p=X_{1,p}\cup X_{2,p}$ and $X_p\neq X_{i,p}$ for $i=1,2$. Otherwise, we say that $X_p$ is {\em irreducible}. The {\em analytic closure} of a set germ $S_p$ of $\K\PP^n_p$ is the smallest analytic set germ $X_p$ that contains $S_p$.

\subsubsection{State of the art.} 
We recall the geometric characterization of the $1$-dimensional polynomial images of affine spaces proposed in \cite[Thm.1.1 \& Prop.1.2]{fe1} (see also \cite[Prop.2.1, Cor.2.2]{fg2}) and the description of those with ${\rm p}_{\mathfrak A}=1$.

\begin{thm}\label{dim1p}
Let $S\subset\R^m$ be a $1$-dimensional semialgebraic set. The following conditions are equivalent:
\begin{itemize}
\item[(i)] ${\rm p}_{\mathfrak A}(S)\leq2$.
\item[(ii)] ${\rm p}_{\mathfrak A}(S)<+\infty$.
\item[(iii)] $S$ is irreducible, unbounded and $\cl_{\C\PP^m}^{\zar}(S)$ is an invariant rational curve such that the set of points at infinity $\cl_{\C\PP^m}^{\zar}(S)\cap\mathsf{H}^m_\infty(\C)$ is a singleton $\{p\}$ and the analytic set germ $\cl_{\C\PP^m}^{\zar}(S)_p$ is irreducible.
\end{itemize}
We also have: ${\rm p}_{\mathfrak A}(S)=1$ if and only if ${\rm p}_{\mathfrak A}(S)<+\infty$ and $S$ is closed in $\R^m$.
\end{thm}

The counterpart of the previous results in the regular setting, already proved in \cite[Thm.1.3 \& Prop.1.4]{fe1}, consists of the full geometric characterization of the $1$-dimensional regular images of affine spaces and the description of those with ${\rm r}_{\mathfrak A}=1$.

\begin{thm}\label{dim1r}
Let $S\subset\R^m$ be a $1$-dimensional semialgebraic set. The following conditions are equivalent:
\begin{itemize}
\item[(i)] ${\rm r}_{\mathfrak A}(S)\leq2$.
\item[(ii)] ${\rm r}_{\mathfrak A}(S)<+\infty$.
\item[(iii)] $S$ is irreducible and $\cl_{\R\PP^m}^{\zar}(S)$ is a rational curve.
\end{itemize}
We also have: ${\rm r}_{\mathfrak A}(S)=1$ if and only if ${\rm r}_{\mathfrak A}(S)<+\infty$ and either 
\begin{itemize}
\item[(1)] $\cl_{\R\PP^m}(S)=S$ or 
\item[(2)] $\cl_{\R\PP^m}(S)\setminus S=\{p\}$ is a singleton and the analytic closure of the set germ $S_p$ is irreducible. 
\end{itemize}
\end{thm}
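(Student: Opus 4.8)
The plan is to prove (i)$\Leftrightarrow$(ii), then the bound ${\rm r}(S)\leq2$, and finally the criterion for ${\rm r}(S)=1$. For the implication (i)$\Rightarrow$(ii), suppose $S=f(\R^n)$ with $f$ a rational map regular on $\R^n$. Since $\R^n$ is connected and $f$ is continuous there, $S$ is connected; and since $S$ is a regular image, its ring of Nash functions ${\mathcal N}(S)$ is a domain, so $S$ is irreducible in the required sense (these are the general properties of regular images recalled in the introduction). Extending $f$ to an invariant rational map $\C\PP^n\dashrightarrow\C\PP^m$, its image has Zariski closure the irreducible complex curve $C:=\cl_{\C\PP^m}^{\zar}(S)$, which is thus dominated by $\C\PP^n$ and hence unirational; as $\dim C=1$, L\"uroth's theorem forces $C$ to be rational, and invariance of the extension makes $\cl_{\R\PP^m}^{\zar}(S)$ a real rational curve. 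This is (ii).

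For (ii)$\Rightarrow$(i) together with ${\rm r}(S)\leq2$, write $C:=\cl_{\R\PP^m}^{\zar}(S)$ and use the definition of real rational curve (and Lemma \ref{curve}) to fix a birational regular map $\phi:\R\PP^1\to C_{(1)}$, a homeomorphism off a finite set. I would pull $S$ back to the semialgebraic set $S':=\phi^{-1}(S)\subset\R\PP^1\equiv\sph^1$, which is a finite union of arcs together with finitely many isolated points, and then realize $S=\phi(S')$ by composing $\phi$ with regular maps out of $\R^n$. Writing $\R\PP^1=\R\cup\{p_\infty\}$ and using the building-block parametrizations (a closed arc from $\R$, an open or half-open arc and the passage around $p_\infty$ from $\R^2$), one assembles a regular map with image exactly $S$; the connectedness and irreducibility of $S$ govern how the finitely many arcs of $S'$ fit together. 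The open or half-open pieces force the use of a second variable, yielding the bound ${\rm r}(S)\leq2$.

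For the criterion ${\rm r}(S)=1$, necessity goes as follows. If $S=f(\R)$ with $f$ regular on $\R$, then, $\R\PP^1$ being a smooth projective curve, $f$ extends to a morphism $\bar f:\R\PP^1\to\R\PP^m$; since $\R\PP^1$ is the one-point compactification of $\R$ by $p_\infty$, the compact set $\bar f(\R\PP^1)=S\cup\{p\}$ with $p:=\bar f(p_\infty)$ equals $\cl_{\R\PP^m}(S)$, so $\cl_{\R\PP^m}(S)\setminus S\subseteq\{p\}$. If $p\in S$ this is case (1); if $p\notin S$ then $\cl_{\R\PP^m}(S)\setminus S=\{p\}$, and as a neighbourhood of $p_\infty$ in the smooth curve $\R\PP^1$ is a single arc, $\bar f$ maps it to one analytic branch through $p$, so the analytic closure of the germ $S_p$ is irreducible, which is case (2). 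For sufficiency I would reverse this: in case (1) the set $S$ is compact, and, exactly as for $\sph^1$ in Example \ref{s1rp1}, one precomposes a birational parametrization with a surjective odd-degree self-map $\R\PP^1\to\R\PP^1$ having a finite pole, so that the single value $\phi(p_\infty)$ is also attained at a finite parameter and $f(\R)=S$; in case (2) one arranges the unique omitted point to be $\phi(p_\infty)$, so that $f(\R)=\cl_{\R\PP^m}(S)\setminus\{p\}=S$.

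The main obstacle is the sufficiency direction: producing a regular parametrization whose image is \emph{exactly} $S$, i.e.\ controlling precisely which of the finitely many boundary or limit points are attained. In the compact case one must recapture the value $\phi(p_\infty)$ at a finite parameter — this is what forces degree at least $3$ and a careful choice of numerator and denominator without common real zeros — while in the non-closed cases one must check that the extra variable of $\R^2$ can delete or restore each prescribed endpoint without disturbing the rest of $S$; this is precisely where the bound ${\rm r}(S)\leq2$ is genuinely needed.
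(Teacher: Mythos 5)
Your two ``soft'' halves are essentially sound and follow the intended route: for (i)$\Rightarrow$(ii), extending $f$ invariantly and invoking L\"uroth on the dominated curve is exactly the mechanism behind Lemmas \ref{fact1} and \ref{fact2} (note the paper states Theorem \ref{dim1r} without proof, citing \cite{fe1}, so the comparison is with the toolkit it recalls); and your necessity argument for ${\rm r}(S)=1$ --- extend to $\bar f:\R\PP^1\to\R\PP^m$ by Lemma \ref{curve}, observe $\cl_{\R\PP^m}(S)\setminus S\subseteq\{\bar f(p_\infty)\}$, and get irreducibility of the analytic closure of $S_p$ because the germ of $S$ at $p$ is the image of the single arc germ at $p_\infty$ --- is correct (it uses tacitly that $\bar f^{-1}(p)=\{p_\infty\}$ when $p\notin S$, which holds since any other preimage would put $p$ in $S$).

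The constructive halves, however, contain a genuine gap. First, ``assembling'' a map from building-block parametrizations is not a valid operation: a union of regular images is not a regular image, so you must exhibit \emph{one} connected semialgebraic set $T^*\subset\R\PP^1$ (a closed arc, a half-open arc, an open arc, $\R\PP^1$, or $\R\PP^1$ minus a point) with $\phi(T^*)=S$, and then parametrize $T^*$ by a single regular map from $\R$ or $\R^2$. The existence of such a connected $T^*$ is precisely where irreducibility in the Nash sense --- and not mere connectedness --- must enter, and your sketch never uses it beyond the phrase ``govern how the arcs fit together''. Concretely: on a rational curve with a singular point $q$ at which two distinct analytic branches cross, take $S$ to be the union of one short closed arc from each branch through $q$; this $S$ is connected, with irreducible rational Zariski closure, yet no connected $T^*\subset\R\PP^1$ has $\phi(T^*)=S$ (any arc of $\R\PP^1$ joining the two preimage arcs sweeps points outside $S$), and indeed $S$ is \emph{not} irreducible --- no analytic path can turn the corner between two distinct analytic branches. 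So the heart of (ii)$\Rightarrow$(i), and of the sufficiency of (1)/(2), is to show that Nash-irreducibility forces the preimage arcs of $S$ to lie consecutively along $\R\PP^1$; this is missing. Second, your claim that ``open or half-open pieces force the use of a second variable'' is false and inconsistent with the theorem itself: $[0,1)$ is a regular image of $\R$ (e.g.\ $t\mapsto t^2/(1+t^2)$), as Table \ref{table1} records and as case (2) asserts --- only \emph{two} omitted limit points, or one omitted point at which the germ is reducible, push you to $\R^2$; your own one-variable treatment of case (2) in the last paragraph contradicts your earlier assertion.
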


\begin{remarks}\label{nor1p2}
(i) There is no $1$-dimensional semialgebraic set $S\subset\R^m$ with ${\rm p}_{\mathfrak A}(S)=2$ and ${\rm r}_{\mathfrak A}(S)=1$, see \cite[Cor.1.5]{fe1}.

(ii) Let $S\subset\R^m$ be a semialgebraic set of dimension $1$. We claim: ${\rm r}_{\mathfrak A}(S)={\rm rs}_{\mathfrak A}(S)$.

As regular functions are regulous functions, ${\rm rs}_{\mathfrak A}(S)\leq{\rm r}_{\mathfrak A}(S)$. Let us check: {\em If ${\rm rs}_{\mathfrak A}(S)<+\infty$, then ${\rm r}_{\mathfrak A}(S)<+\infty$.} This implies the equivalence: {\em ${\rm rs}_{\mathfrak A}(S)<+\infty$ if and only if ${\rm r}_{\mathfrak A}(S)<+\infty$}. By Theorem \ref{dim1r} we have to prove: {\em $\cl_{\R\PP^m}^{\zar}(S)$ is a rational curve and $S$ is irreducible}. 

Let $n\geq1$ and $f:\R^n\to\R^m$ be a regulous map such that $f(\R^n)=S$. By \cite[Lem.2.2(i) \& Lem.2.3(i)]{fe1} there exists a rational function $g\in\R(\x_1,\ldots,\x_n)$ and a regular map $h:\R\to\R^m$ such that $f=h\circ g$. Then $S\subset T:=\im(h)$. By \cite[Lem.2.2(ii)]{fe1} the Zariski closure $\cl_{\R\PP^m}^{\zar}(T)$ is a rational curve. As $S$ is $1$-dimensional and $\cl_{\R\PP^m}^{\zar}(T)$ is irreducible, $\cl_{\R\PP^m}^{\zar}(S)=\cl_{\R\PP^m}^{\zar}(T)$ is a rational curve.

To prove that $S$ is irreducible it is enough to show by \cite[Main Thm.1.4 \& Lem.7.3]{fe2} that $S$ is connected by analytic paths. Pick two different points $y_1,y_2\in S$ and let $x_1,x_2\in\R^n$ be such that $f(x_i)=y_i$ for $i=1,2$. Consider the line $L\subset\R^n$ that passes through $x_1,x_2$ and let $\varphi:\R\to\R^n$ be an affine parameterization of $L$ such that there exist values $t_1<t_2$ in $\R$ satisfying $\varphi(t_i)=x_i$ for $i=1,2$. The map $f\circ\varphi:\R\to\R^m$ is regulous, so by \cite[Cor.3.6]{fhmm} $f$ is a regular map and, consequently, it is an analytic map. Thus, $\alpha:=(f\circ\varphi)|_{[t_1,t_2]}:[t_1,t_2]\to S$ is an analytic path that connects $y_1$ and $y_2$, so $S$ is connected by analytic paths. 

Suppose next that ${\rm r}_{\mathfrak A}(S)<+\infty$. By Theorem \ref{dim1r} we have ${\rm rs}_{\mathfrak A}(S)\leq{\rm r}_{\mathfrak A}(S)\leq2$. If ${\rm rs}_{\mathfrak A}(S)=2$, then $2={\rm rs}_{\mathfrak A}(S)\leq{\rm r}_{\mathfrak A}(S)\leq2$, so ${\rm rs}_{\mathfrak A}(S)={\rm r}_{\mathfrak A}(S)=2$. Assume next ${\rm rs}_{\mathfrak A}(S)=1$. As the regulous maps on $\R$ coincide by \cite[Cor.3.6]{fhmm} with the regular maps on $\R$, we deduce ${\rm r}_{\mathfrak A}(S)={\rm rs}_{\mathfrak A}(S)=1$.\hfill$\sqbullet$ 
\end{remarks}

In the Nash case we have the following two conclusive results proved in \cite{fe2,cf1,cf2}:
\begin{thm}[{\cite[Prop.1.6]{fe2}}]
Let $S\subset\R^m$ be a $1$-dimensional semialgebraic set. The following conditions are equivalent:
\begin{itemize}
\item[(i)] ${\rm n}_{\mathfrak A}(S)=1$.
\item[(ii)] ${\rm n}_{\mathfrak A}(S)<+\infty$.
\item[(iii)] $S$ is irreducible.
\end{itemize}
\end{thm}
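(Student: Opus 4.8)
I would prove the cycle (i) $\Rightarrow$ (ii) $\Rightarrow$ (iii) $\Rightarrow$ (i). The implication (i) $\Rightarrow$ (ii) is immediate, since $\R=\R^1\in\mathfrak A$, so ${\rm n}_{\mathfrak A}(S)=1$ forces ${\rm n}_{\mathfrak A}(S)<+\infty$. For (ii) $\Rightarrow$ (iii) I would argue exactly as in the regular case recalled above: if $S=f(\R^n)$ for a Nash map $f$, then because $f$ is surjective onto $S$ the pullback $f^*:{\mathcal N}(S)\to{\mathcal N}(\R^n)$, $g\mapsto g\circ f$, is a well-defined injective ring homomorphism (a Nash representative of $g$ on a neighbourhood of $S$ composes with $f$ to a Nash function on $\R^n$, and $g\circ f\equiv0$ gives $g\equiv0$ on $f(\R^n)=S$). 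As $\R^n$ is irreducible, ${\mathcal N}(\R^n)$ is an integral domain, hence so is its subring ${\mathcal N}(S)$; thus $S$ is irreducible.

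The substance is (iii) $\Rightarrow$ (i): from irreducibility I must manufacture a surjective Nash map $\R\to S$. First, ${\mathcal N}(S)$ being a domain forces $S$ to be connected and pure $1$-dimensional. The plan is to pass to a nonsingular model. Let $C:=\cl_{\R\PP^m}^{\zar}(S)$; irreducibility of ${\mathcal N}(S)$ makes $C$ an irreducible projective real curve, and I take an invariant normalization $\nu:\widetilde C\to C$ with $\widetilde C$ nonsingular. The key geometric fact is that the real locus of a nonsingular projective real curve is a compact $1$-dimensional Nash manifold without boundary, i.e. a finite disjoint union of circles $\sph^1$. Pulling $S$ back I obtain a proper surjective Nash map $\pi:\widehat S\to S$ from a semialgebraic set $\widehat S\subseteq\widetilde C(\R)$ that is bijective over the dense set of nonsingular points of $S$; this $\pi$ is the normalization of the $1$-dimensional set $S$.

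Next I would establish the crucial translation: \emph{$S$ is irreducible if and only if $\widehat S$ is connected}. The direction I need is the contrapositive of the easy one: a disconnection $\widehat S=A\sqcup B$ into nonempty clopen $1$-dimensional pieces produces zero divisors in ${\mathcal N}(S)$, namely functions descending from Nash functions on $\widehat S$ that vanish on $B$ (resp. $A$) together with the finitely many glued points of $\pi$, whose product then vanishes on $\pi(A)\cup\pi(B)=S$. Granting this, irreducibility gives that $\widehat S$ is connected, hence contained in a single circle $\sph^1\subset\widetilde C(\R)$ and, since $\dim\widehat S=1$, equal either to the whole $\sph^1$ or to an arc of it (open, half-open or closed). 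Each of these is a Nash image of $\R$: an arc is Nash-diffeomorphic to the corresponding interval, and $\R$ surjects onto $(-1,1)$, $[0,1)$ and $[-1,1]$ by the Nash maps $\tfrac{t}{\sqrt{1+t^2}}$, $\tfrac{t^2}{1+t^2}$ and $\tfrac{2t}{1+t^2}$; while the full circle is covered by the surjective Nash map $t\mapsto\bigl(\tfrac{1-3t^2}{(1+t^2)^{3/2}},\tfrac{3t-t^3}{(1+t^2)^{3/2}}\bigr)$, whose argument $3\arctan t$ sweeps an interval of length $3\pi>2\pi$. Composing such a surjection $\R\to\widehat S$ with the Nash map $\pi$ yields a surjective Nash map $\R\to S$, so ${\rm n}_{\mathfrak A}(S)=1$.

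The main obstacle is precisely the equivalence ``irreducible $\iff$ connected normalization'': turning the algebraic condition that ${\mathcal N}(S)$ be an integral domain into the geometric connectedness of $\widehat S$. This requires the normalization theory of $1$-dimensional Nash sets together with a careful comparison of ${\mathcal N}(S)$ with the ring of Nash functions on the $1$-manifold $\widehat S$ (that $\pi^*$ embeds ${\mathcal N}(S)$ into the domain ${\mathcal N}(\widehat S)$ for the converse, and the explicit descending zero divisors above for the direction actually used, including checking that they are genuine Nash functions on $S$). A secondary point worth isolating as a lemma is that, unlike in the regular setting where the Zariski closure must be a rational curve, here rationality is irrelevant: every component of $\widetilde C(\R)$ is Nash-diffeomorphic to $\sph^1$ regardless of the genus of $C$, which is exactly why the Nash invariant drops to $1$ even for arcs of non-rational curves.
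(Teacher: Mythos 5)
The paper does not prove this theorem: it is imported verbatim from \cite[Prop.1.6]{fe2}, so the only comparison available is with the skeleton the paper uses for its own one-dimensional results (Theorems \ref{rb1s1}--\ref{ps1}), and your plan matches that skeleton: normalize the Zariski closure, use irreducibility to extract a connected one-dimensional piece of the preimage mapping onto $S$, and parameterize that piece surjectively from the line. Your implications (i) $\Rightarrow$ (ii) and (ii) $\Rightarrow$ (iii) are correct as written, your explicit Nash surjections of $\R$ onto $(-1,1)$, $[0,1)$, $[-1,1]$ and $\sph^1$ check out (the angle $3\arctan t$ indeed sweeps more than $2\pi$), and your closing remark is exactly the right conceptual point: every circle component of $\widetilde C(\R)$ is Nash-diffeomorphic to $\sph^1$ regardless of genus, which is why ${\rm n}_{\mathfrak A}$ drops to $1$ on arcs of non-rational curves while ${\rm p}_{\mathfrak A},{\rm r}_{\mathfrak A}$ are $+\infty$ (compare the last column of Table \ref{table1}).

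There is, however, a genuine gap at the step you yourself flag as the ``main obstacle'', and it is not merely a matter of care. First, your $\widehat S$ must be defined as the \emph{one-dimensional part} of $\nu^{-1}(S)$, not the full preimage: if $S$ contains a node of $C$ but only one local branch through it, $\nu^{-1}(S)$ picks up an isolated point on the other branch's preimage, so ``irreducible $\iff$ connected preimage'' is false as stated. Second, and more seriously, your zero-divisor construction for the direction you actually need (disconnected $\widehat S$ $\Rightarrow$ $S$ reducible) does not go through as sketched: an element of ${\mathcal N}(S)$ is by definition a function admitting a Nash extension to an open semialgebraic \emph{neighborhood of $S$ in $\R^m$}, and a function on $S$ that merely pulls back to a Nash function on $\widehat S$ compatible at the glued points need not admit such an ambient extension. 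Note that polynomials cannot help here, since $\pi(A)$ and $\pi(B)$ are both Zariski-dense in the irreducible curve $C$, so one needs genuinely transcendental Nash equations separating semialgebraic arcs whose analytic continuation does not stop where the arc stops; producing them is essentially the content of the result you are trying to reprove. This is precisely what \cite[Thm.3.15]{fg3} supplies --- the statement the paper itself invokes at the corresponding step in the proofs of Theorems \ref{rb1s1} and \ref{pb1} (``as $S$ is irreducible, the one-dimensional component $I$ of $\Pi^{-1}(S)$ is connected and $\Pi(I)=S$'') --- and citing it, applied to the invariant normalization of $\cl_{\R\PP^m}^{\zar}(S)$, closes your gap and makes the rest of your argument complete.
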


\begin{thm}[{\cite[Prop.1.20]{cf1}}, {\cite[Thm.1.10]{cf2}}]
Let $S\subset\R^m$ be a $1$-dimensional semialgebraic set. The following conditions are equivalent:
\begin{itemize}
\item[(i)] ${\rm n}_{\mathfrak B}(S)=1$.
\item[(ii)] ${\rm n}_{\mathfrak S}(S)=1$.
\item[(iii)] ${\rm n}_{\mathfrak S}(S)<+\infty$.
\item[(iv)] ${\rm n}_{\mathfrak B}(S)<+\infty$.
\item[(v)] $S$ is irreducible and compact.
\end{itemize}
\end{thm}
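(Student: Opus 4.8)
The plan is to reduce the fourfold equivalence among (i)--(iv) to the single substantial equivalence (v) $\iff$ (ii), and to settle the latter by transporting the affine Nash characterization (just proved: ${\rm n}_{\mathfrak A}(S)=1\iff S$ irreducible) to a compact model through the normalization of the Zariski closure of $S$. For the cheap equivalences, recall from the introduction that each Nash invariant takes only the values $\dim(S)$ or $+\infty$; as $\dim(S)=1$ this gives ${\rm n}_{\mathfrak B}(S),{\rm n}_{\mathfrak S}(S)\in\{1,+\infty\}$, which is exactly (i) $\iff$ (iv) and (ii) $\iff$ (iii). The equality ${\rm n}_{\mathfrak B}(S)={\rm n}_{\mathfrak S}(S)$ recorded there yields (i) $\iff$ (ii). Hence it remains only to show that $S$ is irreducible and compact if and only if ${\rm n}_{\mathfrak S}(S)=1$.

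\emph{Necessity.} Assume $S=f(\sph^1)$ for a Nash map $f$. Since $\sph^1$ is compact and $f$ is continuous, $S$ is compact. Moreover $f$ induces a ring homomorphism $f^*:{\mathcal N}(S)\to{\mathcal N}(\sph^1)$, $g\mapsto g\circ f$ (the composition of Nash maps is Nash), which is injective because $f$ is surjective: if $g\circ f\equiv0$ on $\sph^1$, then $g\equiv0$ on $S=f(\sph^1)$. As ${\mathcal N}(\sph^1)$ is an integral domain, so is its subring ${\mathcal N}(S)$; that is, $S$ is irreducible, which proves (v).

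\emph{Sufficiency.} Now let $S$ be irreducible and compact of dimension $1$. Irreducibility forces $S$ to be pure dimensional and connected, so $S=S_{(1)}$. Set $C:=\cl_{\R\PP^m}^{\zar}(S)$, an irreducible projective real curve, and let $\pi:\widetilde C\to C$ be its normalization, with $\widetilde C$ a nonsingular irreducible projective real curve. Since $S$ is compact, its lift lies in the real locus $\widetilde C(\R)$, a compact nonsingular real curve of dimension $1$, hence a finite disjoint union of circles. Let $\widetilde S$ be the closure in $\widetilde C(\R)$ of the preimage under $\pi$ of the nonsingular part of $S$. The crucial point is that $\widetilde S$ is \emph{connected}: because $S$ is Nash-irreducible, its normalized lift cannot split among several circles nor into several arcs of one circle. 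A connected, compact, $1$-dimensional subset of a disjoint union of circles is either a whole circle component, Nash diffeomorphic to $\sph^1$, or a closed arc, Nash diffeomorphic to $\ol{\Bb}_1=[-1,1]$. Finally, $\pi$ is finite, hence proper, so $\pi(\widetilde S)=\cl(S)=S$; composing $\pi|_{\widetilde S}$ with a Nash diffeomorphism from the selected model yields a surjective Nash map from $\sph^1$ (or from $\ol{\Bb}_1$, which is admissible since ${\rm n}_{\mathfrak B}(S)={\rm n}_{\mathfrak S}(S)$) onto $S$. This gives ${\rm n}_{\mathfrak S}(S)=1$, proving (ii).

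\emph{Main obstacle.} The technical heart is the connectivity of $\widetilde S$: converting the algebraic Nash-irreducibility of $S$ into the topological statement that its normalized lift is a single arc or circle. Coupled to this is the requirement that $\pi|_{\widetilde S}$ be genuinely surjective onto $S$, which forces one to control the behaviour of the normalization over the singular points of $C$ lying on $S$ and to check that each such point admits a real preimage inside $\widetilde S$. I expect these two interlocking verifications --- connectivity via irreducibility and full surjectivity at the singularities --- to carry essentially all the weight of the proof, the remaining steps being formal.
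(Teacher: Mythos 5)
First, a point of calibration: the paper does not prove this theorem at all --- it is imported verbatim from \cite[Prop.1.20]{cf1} and \cite[Thm.1.10]{cf2} --- so your proposal can only be compared with the strategy the paper deploys for its own analogues, namely the proofs of Theorems \ref{rb1s1} and \ref{pb1}. Measured against those, your route is essentially the right one and closely parallels them: necessity of (v) via compactness plus the injection $f^*:{\mathcal N}(S)\to{\mathcal N}(\sph^1)$ (this is a clean direct substitute for the paper's appeal to \cite[(3.1)(iv)]{fg3}), and sufficiency by lifting $S$ through the normalization $\pi:\widetilde C\to C$ and recognizing the lift as a closed arc or a full circle, hence a Nash copy of $\ol{\Bb}_1$ or $\sph^1$. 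One structural difference is worth noting in your favour: unlike Theorems \ref{rb1s1} and \ref{pb1}, no rationality of the Zariski closure is available here, so allowing $\widetilde C$ of arbitrary genus, with real locus a finite disjoint union of circles, is exactly the correct generality.

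Two soft spots, both repairable. First, deriving (i) $\iff$ (iv) and (ii) $\iff$ (iii) from the introduction's assertion that the Nash invariants only take the values $\dim(S)$ or $+\infty$ is circular: that assertion is attributed in the paper precisely to \cite{cf1,cf2}, i.e., to the theorem you are proving. The fix is immediate and already contained in your own argument: your necessity proof works verbatim with $\sph^n$ or $\ol{\Bb}_n$ in place of $\sph^1$ (each is a compact connected set whose ring of Nash functions is a domain), so each of (i)--(iv) implies (v), and your sufficiency step closes the loop via the standard surjections between $\ol{\Bb}_1$ and $\sph^1$ (Example \ref{s1rp1}). Second, the step you yourself flag as the crux --- connectivity of the whole lift $\widetilde S$ --- is asserted rather than proved, and it is both stronger than needed and stronger than what the literature hands you: \cite[Thm.3.15]{fg3}, which the paper invokes at exactly the analogous moment in the proofs of Theorems \ref{rb1s1} and \ref{pb1}, produces a connected one-dimensional component $I$ of $\pi^{-1}(S)$ with $\pi(I)=S$; since $S$ is compact and $\pi$ proper, $I$ is a compact connected subset of a circle, i.e., an arc or the whole circle, and your argument finishes. (Your stronger claim is in fact true a posteriori: $\pi$ is birational, hence one-to-one off a finite set, so a second one-dimensional component $B$ of $\widetilde S$ would give $\pi(B)\subseteq S=\pi(I)$ and hence infinitely many doubly covered points --- but as written this justification is absent, and without \cite[Thm.3.15]{fg3} or this birationality argument the proposal has a genuine hole at its load-bearing step.) With that citation inserted, the sketch is correct.
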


\subsection{Main results}
The main results of this article, which will be proved in Section \ref{s3}, are the following. We begin with the invariants corresponding to regular and regulous cases.

\begin{thm}\label{rb1s1}
Let $S\subset\R^m$ be a $1$-dimensional semialgebraic set. The following conditions are equivalent:
\begin{itemize}
\item[(i)] ${\rm r}_{\mathfrak B}(S)=1$.
\item[(ii)] ${\rm r}_{\mathfrak B}(S)<+\infty$.
\item[(iii)] ${\rm rs}_{\mathfrak B}(S)=1$.
\item[(iv)] ${\rm rs}_{\mathfrak B}(S)<+\infty$.
\item[(v)] ${\rm r}_{\mathfrak S}(S)=1$.
\item[(vi)] ${\rm r}_{\mathfrak S}(S)<+\infty$.
\item[(vii)] ${\rm rs}_{\mathfrak S}(S)=1$.
\item[(viii)] ${\rm rs}_{\mathfrak S}(S)<+\infty$.
\item[(ix)] $S$ is irreducible, compact and $\cl_{\R\PP^m}^{\zar}(S)$ is a rational curve.
\end{itemize}
\end{thm}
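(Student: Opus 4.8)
The plan is to establish the chain of equivalences by exploiting the geometric relations among the models $\mathfrak{B}$ and $\mathfrak{S}$ already recorded in the introduction, together with Theorem~\ref{dim1r} as the engine that converts a rational-curve hypothesis into a one-dimensional map. First I would observe that the core equivalence to prove is between condition (ix) and any one of the finiteness conditions; the remaining equivalences among (i)--(viii) should follow formally from the structural inequalities ${\rm rs}_{\mathfrak{B}}(S)={\rm rs}_{\mathfrak{S}}(S)$, ${\rm r}_{\mathfrak{B}}(S)={\rm r}_{\mathfrak{S}}(S)$, and ${\rm rs}_{\mathfrak{E}}(S)\leq{\rm r}_{\mathfrak{E}}(S)$ stated in the invariant relations, once I know that whenever any of these invariants is finite it is forced to equal $1$. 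So the whole theorem reduces to two implications: \textbf{(ix) $\Rightarrow$ ${\rm r}_{\mathfrak{B}}(S)=1$} (the hardest, constructive direction), and \textbf{(any finiteness) $\Rightarrow$ (ix)} (the necessity direction).

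For the necessity direction, suppose one of the invariants is finite, say $S=f(\ol{\Bb}_n)$ or $S=f(\sph^n)$ for a regulous or regular $f$. Since $\ol{\Bb}_n$ and $\sph^n$ are compact, $S$ is compact; since they are irreducible (connected with irreducible Nash ring), $S$ inherits irreducibility in the Nash sense noted in the introduction, hence is irreducible as a semialgebraic set and $\cl_{\R\PP^m}^{\zar}(S)$ is irreducible. To get that the Zariski closure is a \emph{rational} curve, I would compose with the regular surjection $\R^n\dashrightarrow\ol{\Bb}_n$ (or use that $\sph^n$ is a regular image of $\ol{\Bb}_n$ and of $\R^n$, as cited) to realize $S$ as a regular or regulous image of some $\R^n$ as well; then Theorem~\ref{dim1r}, or rather the implication ${\rm r}_{\mathfrak{A}}(S)<+\infty \Rightarrow \cl_{\R\PP^m}^{\zar}(S)$ is a rational curve, gives exactly the rationality of the Zariski closure. (For the regulous case I would invoke Remark~\ref{nor1p2}(ii) to pass between regulous and regular in dimension one.) This yields (ix).

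For the constructive direction I assume (ix): $S$ is irreducible, compact, and $C:=\cl_{\R\PP^m}^{\zar}(S)$ is a rational curve, so $C_{(1)}$ is the regular image of $\R\PP^1$ under a birational map $\varphi$. The strategy is to produce a single regular map from $\sph^1$ (equivalently from $\ol{\Bb}_1=[-1,1]$, since by Example~\ref{s1rp1} there is a regular map $[-1,1]\to\sph^1$ onto the whole circle) whose image is exactly $S$. Because $S$ is compact and one-dimensional, $S$ pulls back under $\varphi$ to a compact semialgebraic subset $T\subset\R\PP^1$; the point is that a compact one-dimensional irreducible piece of a rational curve should be parametrizable off of $\R\PP^1\cong\sph^1$. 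I would first reparametrize so that the (finitely many) points of $\R\PP^1$ that $\varphi$ sends to infinity or to singular/boundary behavior are controlled, then use that $\R\PP^1$ itself is a regular image of $\sph^1$ to glue everything into a regular map on $\sph^1$ with image $S$; this gives ${\rm r}_{\mathfrak{S}}(S)=1$, and via the equality ${\rm r}_{\mathfrak{S}}={\rm r}_{\mathfrak{B}}$ also ${\rm r}_{\mathfrak{B}}(S)=1$.

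The main obstacle I expect is precisely this gluing/surjectivity step in the constructive direction: ensuring that the preimage $T\subset\R\PP^1\cong\sph^1$ of $S$ can be traversed by a \emph{single} regular (or regulous) parametrization of $\sph^1$ that hits every point of $S$ and \emph{no} extra points, while simultaneously keeping all denominators nonvanishing on $\sph^1$ (the regularity constraint) and respecting compactness. Handling the points where $S$ fails to be closed in $C_{(1)}$, or where $\varphi$ is only birational rather than an isomorphism, is the delicate part; I anticipate that the preparatory results of Section~\ref{s2} (normalization of the rational curve and control of points at infinity) are exactly what is needed to reduce $T$ to an arc or finite union of arcs that admit such an honest regular covering by $\sph^1$. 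Once that surjection is built, closing the logical loop among (i)--(viii) is purely formal.
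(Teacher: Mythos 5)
Your overall architecture (reduce everything to the two implications ``finiteness $\Rightarrow$ (ix)'' and ``(ix) $\Rightarrow$ invariant $=1$'', closing the loop with the surjections between $\ol{\Bb}_n$ and $\sph^n$) is exactly the paper's, and your necessity direction is essentially sound: the paper factors the rational map directly through $\R$ via Lemmas \ref{fact2} and \ref{fact1} rather than first pushing the map up to $\R^n$ and quoting Theorem \ref{dim1r}, but these are the same engine, and your compactness/irreducibility observations match the paper's citations. (If you keep your route, you should at least verify that precomposing a regulous map on $\ol{\Bb}_n$ or $\sph^n$ with a regular surjection from $\R^n$ stays regulous --- dominance of the surjection makes the preimage of the pole set a proper algebraic subset --- but this is routine.)

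The genuine gap is in the constructive direction (ix) $\Rightarrow$ (i), which you yourself flag as ``the main obstacle'' and then only \emph{anticipate} can be handled; the paper's proof of precisely this step is short but rests on two inputs you never supply. First, the dichotomy: let $\pi:\R\PP^1\to\cl_{\R\PP^m}^{\zar}(S)$ be the normalization, so $\pi(\R\PP^1)=\cl_{\R\PP^m}^{\zar}(S)_{(1)}$. If $S=\cl_{\R\PP^m}^{\zar}(S)_{(1)}$, Example \ref{s1rp1} finishes immediately; if not, one performs a projective change of coordinates on $\R\PP^1$ so that $\pi(p_\infty)\in\cl_{\R\PP^m}^{\zar}(S)_{(1)}\setminus S$, forcing the preimage of $S$ into the affine chart $\R$, where $\pi$ is regular with nonvanishing denominators --- this disposes of your worry about ``keeping all denominators nonvanishing'' at no cost. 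Second, and decisively: the paper invokes the irreducibility theory of \cite{fg3} (see \cite[3.5]{fg3}, and \cite[Thm.3.15]{fg3} in the proof of Theorem \ref{pb1}) to conclude that the one-dimensional part of $\pi^{-1}(S)$ is a \emph{single} interval $I$ with $\pi(I)=S$; compactness of $S$ and properness of $\pi$ then make $I$ a compact interval, which after an affine change is $[-1,1]=\ol{\Bb}_1$. Your sketch instead allows ``an arc or finite union of arcs,'' and the disconnected case is fatal to your gluing plan: the image of $\sph^1$ under a continuous map is connected in $\R\PP^1$, so no single regular parametrization can traverse a disconnected $T$ while hitting ``no extra points.'' The connectedness of the relevant preimage component is a nontrivial theorem about irreducible semialgebraic sets and normalizations, not something recoverable from compactness or from reparametrizing $\varphi$; without it your construction does not close.
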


We present next the results for the polynomial case. We begin with the invariant corresponding to closed balls, which is simpler.

\begin{thm}\label{pb1}
Let $S\subset\R^m$ be a $1$-dimensional semialgebraic set. The following conditions are equivalent:
\begin{itemize}
\item[(i)] ${\rm p}_{\mathfrak B}(S)=1$.
\item[(ii)] ${\rm p}_{\mathfrak B}(S)<+\infty$.
\item[(iii)] $S$ is irreducible, compact and $\cl_{\C\PP^m}^{\zar}(S)$ is an invariant rational curve such that the set of points at infinity $\cl_{\C\PP^m}^{\zar}(S)\cap\mathsf{H}^m_\infty(\C)$ is a singleton $\{p\}\subset\mathsf{H}^m_\infty(\R)$ and the analytic set germ $\cl_{\C\PP^m}^{\zar}(S)_p$ is irreducible.
\end{itemize}
\end{thm}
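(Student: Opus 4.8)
\section*{Proof proposal}

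The plan is to prove the chain of equivalences by establishing the two nontrivial implications (iii)$\Rightarrow$(i) and (ii)$\Rightarrow$(iii), since (i)$\Rightarrow$(ii) is immediate from the definition of the invariant ${\rm p}_{\mathfrak B}$. The geometric conditions in (iii) are exactly those of Theorem \ref{dim1p}(ii) with the word \emph{unbounded} replaced by \emph{compact}; this parallel is what I would exploit throughout. Note that Theorem \ref{dim1p} already tells us that a $1$-dimensional set $S$ admitting a polynomial parametrization from an affine space must be unbounded, so for a \emph{compact} $S$ we cannot simply reuse that theorem: the whole point is that restricting the domain to $\ol{\Bb}_1=[-1,1]$ allows compact images while keeping the same projective-geometric obstruction at infinity.

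For (iii)$\Rightarrow$(i), I would first use irreducibility together with the hypothesis that $\cl_{\C\PP^m}^{\zar}(S)$ is an invariant rational curve to produce a normalization, i.e. an invariant birational regular map $\varphi:\C\PP^1\dashrightarrow\cl_{\C\PP^m}^{\zar}(S)$ defined over $\R$ (this is where the invariance hypothesis is used, via the discussion of invariant rational maps in the Notations subsection and Lemma \ref{curve}). The singleton-at-infinity condition $\cl_{\C\PP^m}^{\zar}(S)\cap\mathsf{H}_\infty(\C)=\{p\}$ together with the irreducibility of the germ $\cl_{\C\PP^m}^{\zar}(S)_p$ guarantees that $\varphi^{-1}(p)$ is a single point $q_\infty\in\R\PP^1$, so that the restriction $\psi:=\varphi|_{\R\PP^1}$ maps the affine part $\R\PP^1\setminus\{q_\infty\}\cong\R$ onto the affine part $\cl_{\R\PP^m}^{\zar}(S)\cap\R^m$ by a \emph{polynomial} map in a suitable affine chart. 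Since $S$ is compact, $\psi^{-1}(S)$ is a compact semialgebraic subset of $\R$, hence a finite union of closed intervals and points; choosing an affine reparametrization of $\R\PP^1$ carrying one of these intervals (or their convex hull, after checking connectedness of $S$ forces a single interval) onto $[-1,1]$ yields a polynomial map $f:\R\to\R^m$ with $f([-1,1])=S$, giving ${\rm p}_{\mathfrak B}(S)=1$.

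For the converse (ii)$\Rightarrow$(iii), suppose $f:\R^n\to\R^m$ is polynomial with $f(\ol{\Bb}_n)=S$. Compactness of $S$ is automatic as the continuous image of the compact set $\ol{\Bb}_n$. Irreducibility of $S$ (in the Nash sense) follows because $\ol{\Bb}_n$ is irreducible and $f$ is a polynomial, hence Nash, map, so ${\mathcal N}(S)$ embeds into an integral domain. The heart of the matter is the claim about $\cl_{\C\PP^m}^{\zar}(S)$: it is a rational curve because $S$ is a $1$-dimensional polynomial image of the irreducible set $\ol{\Bb}_n$, and it is invariant since $f$ has real coefficients; the delicate part is the \emph{single point at infinity with irreducible germ}. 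Here I would compose with the regular map $\sph^n\to\ol{\Bb}_n$ (the projection recalled in the Introduction) or argue directly on the homogenization of $f$, studying the behavior of $\cl_{\C\PP^m}^{\zar}(S)$ at $\mathsf{H}_\infty(\C)$ via the extension of $f$ to $\C\PP^n\dashrightarrow\C\PP^m$ and controlling the indeterminacy locus on the boundary sphere $\sph^{n-1}=\partial\ol{\Bb}_n$.

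The main obstacle I anticipate is precisely this last point-at-infinity analysis in (ii)$\Rightarrow$(iii): proving that the image curve meets $\mathsf{H}_\infty(\C)$ in a single point with irreducible germ. The difficulty is that, unlike the affine-space case of Theorem \ref{dim1p} where the source has a single valuation at infinity, the closed ball $\ol{\Bb}_n$ has a whole boundary sphere, so a priori distinct boundary directions could map to distinct points at infinity of $\cl_{\C\PP^m}^{\zar}(S)$. I expect the resolution to come from the fact that the Zariski closure is governed by the \emph{generic} behavior of $f$ rather than the boundary: one should show that $\cl_{\C\PP^m}^{\zar}(S)=\cl_{\C\PP^m}^{\zar}(f(\R^n))$ and then invoke the structure of the rational curve together with the compactness of $S$ to force the points at infinity of the projective closure to collapse to the unique point prescribed by the normalization, reducing the situation to the analysis already carried out in the one-variable reparametrization from part (iii)$\Rightarrow$(i).
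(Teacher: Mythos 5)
Your overall route coincides with the paper's. The implication (i)$\Rightarrow$(ii) is trivial; for (ii)$\Rightarrow$(iii) the paper does exactly what you anticipate in your final paragraph: since $\ol{\Bb}_n$ has nonempty interior, the identity principle gives $f(\R^n)\subset\cl^{\zar}(S)$, so $T:=f(\R^n)$ is a $1$-dimensional polynomial image of an affine space with the same Zariski closure as $S$, and Theorem \ref{dim1p} applied to $T$ delivers the invariant rational curve with a single point at infinity and irreducible germ. This dissolves the ``boundary sphere'' worry you raise, since the closure is governed by $f(\R^n)$, not by $f(\partial\ol{\Bb}_n)$. (The paper additionally proves (ii)$\Rightarrow$(i) directly, factoring $f=h\circ g$ through $\R$ via Lemma \ref{fact2} so that $g(\ol{\Bb}_n)$ is a compact interval; your implication cycle (i)$\Rightarrow$(ii)$\Rightarrow$(iii)$\Rightarrow$(i) makes that step dispensable, which is a legitimate minor simplification.)

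The genuine gap is in your (iii)$\Rightarrow$(i), at the parenthetical claim that ``connectedness of $S$ forces a single interval'' in $\psi^{-1}(S)$. This is false: if the curve has a real node $x$ with two real analytic branches, the (birational) normalization $\Pi$ sends two distinct points $t_1\neq t_2\in\R$ to $x$, and taking $S$ to be the union of a short arc through $x$ inside each branch produces a connected compact set whose preimage consists of two disjoint intervals, one around $t_1$ and one around $t_2$. Neither interval maps onto $S$, and your fallback of taking the convex hull also fails, since $\Pi$ of the hull strictly contains $S$ in general. What rescues the implication is precisely the \emph{irreducibility} of $S$ assumed in (iii) --- the nodal example above is connected but reducible --- and the tool the paper uses for it: by \cite[Thm.3.15]{fg3}, for irreducible $S$ there is a single $1$-dimensional connected component $I$ of $\Pi^{-1}(S)\cap\R\PP^1$ with $\Pi(I)=S$ exactly; compactness of $S$ and properness of $\Pi$ then make $I$ a compact interval, normalized to $[-1,1]$ by an affine change. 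With this substitution (and with the normalization chosen so that $\Pi([0:1])=p$, which forces $\Pi_0=\t_0^d$ and hence $\Pi|_{\R}$ polynomial --- the justification missing from your assertion that the restriction is polynomial ``in a suitable affine chart''), your argument closes and agrees with the paper's proof.
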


The case of polynomial images of spheres presents the following peculiarity, which contrast with the polynomial images of affine spaces: {\em The family of polynomial images of the circle $\sph^1$ is larger than the family of the polynomial images of the spheres $\sph^k$ of dimension $k\geq2$.} This statement is deduced from Theorem \ref{ps1} and Proposition \ref{ps2} below. In fact, the reader can check using the quoted two results that $\sph^1$ is an example of a $1$-dimensional semialgebraic set, which is a polynomial image of $\sph^1$, but it is not a polynomial image of $\sph^k$ for each $k\geq2$. ({\em Hint: $\cl_{\C\PP^2}^{\zar}(\sph^1)\cap\mathsf{H}^2_\infty(\C)=\{[0:1:{\tt i}],[0:1:-{\tt i}]\}$}). We denote along this article ${\tt i}:=\sqrt{-1}$.

We characterize next the images of $\sph^1$ in $\R^m$ under polynomial maps.

\begin{thm}\label{ps1}
Let $S\subset\R^m$ be a $1$-dimensional semialgebraic set. The following conditions are equivalent:
\begin{itemize}
\item[(i)] ${\rm p}_{\mathfrak S}(S)=1$.
\item[(ii)] $S$ is irreducible, compact and the Zariski closure $\cl_{\C\PP^m}^{\zar}(S)$ is an invariant rational curve such that one of the following three situations hold:
\begin{itemize}
\item[(1)] $\cl_{\C\PP^m}^{\zar}(S)\cap\mathsf{H}^m_\infty(\C)=\{p\}$ is a singleton (that belongs to $\mathsf{H}^m_\infty(\R)$) and the analytic set germ $\cl_{\C\PP^m}^{\zar}(S)_p$ is irreducible.
\item[(2)] $\cl_{\C\PP^m}^{\zar}(S)\cap\mathsf{H}^m_\infty(\C)=\{p\}$ is a singleton (that belongs to $\mathsf{H}^m_\infty(\R)$), the analytic set germ $\cl_{\C\PP^m}^{\zar}(S)_p$ has exactly two irreducible components that are conjugated, and $S=\cl_{\R\PP^m}^{\zar}(S)_{(1)}$.
\item[(3)] $\cl_{\C\PP^m}^{\zar}(S)\cap\mathsf{H}^m_\infty(\C)=\{q,\ol{q}\}$ (where the points $q,\ol{q}\not\in\mathsf{H}^m_\infty(\R)$), the analytic set germs $\cl_{\C\PP^m}^{\zar}(S)_q$ and $\cl_{\C\PP^m}^{\zar}(S)_{\ol{q}}$ are irreducible and conjugated, and $S=\cl_{\R\PP^m}^{\zar}(S)_{(1)}$.
\end{itemize}
\end{itemize}
\end{thm}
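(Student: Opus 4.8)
The plan is to reduce the statement to a dictionary between polynomial maps on $\sph^1$ and Hermitian Laurent polynomials, and then read off the three configurations at infinity from the behaviour of the two circular points. Complexifying and closing up, $\cl_{\C\PP^m}^{\zar}(\sph^1)$ is the smooth conic $\{x^2+y^2=x_0^2\}\subset\C\PP^2$, which is rational with normalization $\mu:\C\PP^1\to\cl_{\C\PP^2}^{\zar}(\sph^1)$. In a suitable affine coordinate $z$ one has $x=\tfrac12(z+z^{-1})$ and $y=\tfrac1{2i}(z-z^{-1})$, so $\mu$ carries the real structure $z\mapsto 1/\ol z$ (whose fixed locus $\{|z|=1\}$ maps bijectively onto $\sph^1$) and sends the two conjugate circular points at infinity to $\mu(0)=(0:1:i)$ and $\mu(\infty)=(0:1:-i)$. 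The first step is to record the dictionary: a map $\sph^1\to\R^m$ is the restriction of a polynomial map $\R^2\to\R^m$ if and only if, read through $\mu$ in the coordinate $z$, each of its components is a Hermitian Laurent polynomial $\sum_{|k|\le N}c_kz^k$ with $c_{-k}=\ol{c_k}$; equivalently, the associated rational map $\Phi:=f\circ\mu:\C\PP^1\dashrightarrow\C\PP^m$ satisfies $\Phi^{-1}(\mathsf H_\infty(\C))\subseteq\{0,\infty\}$, and it is non-constant precisely when both $0$ and $\infty$ are poles.

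For (i)$\Rightarrow$(ii) write $S=f(\sph^1)$ with $f$ polynomial. Compactness is immediate, and since a polynomial image of $\sph^1$ is in particular a regular one, Theorem \ref{rb1s1} already yields that $S$ is irreducible and that $\cl_{\C\PP^m}^{\zar}(S)$ is an invariant rational curve. By the dictionary $\Phi$ has poles only at $\{0,\infty\}$ and, being non-constant, at both; invariance of $\Phi$ together with $1/\ol 0=\infty$ forces $\Phi(\infty)=\ol{\Phi(0)}$, so $\cl_{\C\PP^m}^{\zar}(S)\cap\mathsf H_\infty(\C)=\{\Phi(0),\Phi(\infty)\}$ is a conjugate pair. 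Factor $\Phi=\nu\circ\pi$ through the normalization $\nu:\C\PP^1\to\cl_{\C\PP^m}^{\zar}(S)$ supplied by Lemma \ref{curve}. If $\pi(0)=\pi(\infty)$ then $\Phi$ has a single branch over a real point $p\in\mathsf H_\infty(\R)$ with irreducible germ; this is case (1), and the involution $z\mapsto 1/z$ realizes the projection $\{|z|=1\}\to[-1,1]$, identifying this situation with the ball case of Theorem \ref{pb1}. If $\pi(0)\ne\pi(\infty)$ then $\pi$ restricts to a covering $S^1\to S^1$ of the real loci and is onto, whence $S=\nu(\text{real locus})=\cl_{\R\PP^m}^{\zar}(S)_{(1)}$; according as $\Phi(0)=\Phi(\infty)$ (a real point $p$ carrying two conjugate branches) or $\Phi(0)\ne\Phi(\infty)$ (two conjugate irreducible points $q,\ol q\notin\mathsf H_\infty(\R)$) we land in case (2) or case (3).

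For (ii)$\Rightarrow$(i) I treat the three configurations separately. In case (1) the hypotheses are exactly those of Theorem \ref{pb1}, so $S$ is a polynomial image of $[-1,1]$; precomposing with the polynomial double cover $\sph^1\to[-1,1]$, $(x,y)\mapsto x$, exhibits $S$ as a polynomial image of $\sph^1$. In cases (2) and (3) I start from the invariant normalization $\nu:\C\PP^1\to\cl_{\C\PP^m}^{\zar}(S)$, equipped with the real structure $z\mapsto 1/\ol z$ so that its real locus $\{|z|=1\}$ maps onto $\cl_{\R\PP^m}^{\zar}(S)_{(1)}=S$. Since $S$ is bounded, no point of $\{|z|=1\}$ maps to infinity, so $\nu^{-1}(\mathsf H_\infty(\C))$ is a single conjugate pair $\{w,1/\ol w\}$ with $|w|\ne1$: two branches over the one real point $p$ in case (2), or the two conjugate points $q,\ol q$ in case (3). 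Using an automorphism of $\C\PP^1$ commuting with $z\mapsto 1/\ol z$ (these act transitively on such pairs) I move this pair onto $\{0,\infty\}$, so that $\nu$ has poles exactly at $\{0,\infty\}$; its components are then Hermitian Laurent polynomials and define a polynomial map $f:\R^2\to\R^m$ with $f(\sph^1)=\nu(\{|z|=1\})=S$. The planar case $m=2$, in which $S\subset\C$ is the image of $\{|z|=1\}$ under a single Laurent polynomial, is where the results of Kobalev--Yang and Wilmshurst are invoked to pin down the image.

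The main obstacle lies in cases (2) and (3) of the converse: one must guarantee that the chosen normalization, after its polar locus is moved onto $\{0,\infty\}$, has no further poles on $\{|z|=1\}$ and that the pole orders at $0$ and $\infty$ are compatible with the Hermitian symmetry $c_{-k}=\ol{c_k}$, so that the components are genuine Laurent polynomials and the image of $\sph^1$ is exactly $S$ rather than a proper subarc. This is precisely where the dichotomy between an irreducible branch at infinity (the ball-type case (1), in which $\Phi$ factors through $z\mapsto 1/z$) and two conjugate branches (the birational cases (2) and (3)) has to be controlled, and where the hypothesis $S=\cl_{\R\PP^m}^{\zar}(S)_{(1)}$ is indispensable to rule out that $f$ sweeps out only part of the real locus of the curve.
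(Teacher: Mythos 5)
Your proposal follows the same overall skeleton as the paper --- the dictionary of \S\ref{pclp} between polynomial maps on $\sph^1$ and Hermitian Laurent polynomials, an invariant normalization, a trichotomy governed by the behaviour of the two poles $0,\infty$, and a converse that parameterizes $S$ by the normalization after moving its polar pair onto $\{0,\infty\}$ (your Hermitian--Laurent construction is the circle-model version of the paper's composite $F=\Pi\circ\Psi$ with $F_0=\x_0^{2p}$, and your case (1) converse, via Theorem \ref{pb1} and the projection $(x,y)\mapsto x$, is identical to the paper's) --- but in the two genuinely hard spots it takes a different and in fact shorter route. The paper proves (i)$\Rightarrow$(ii) by factorizing $\Pi_0$ in $\C[\x_0,\x_1]$ and running a four-step normal-form analysis of the pair $(\Pi,\widetilde{F})$, and then obtains the equality $S=\cl_{\R\PP^m}^{\zar}(S)_{(1)}$ in cases (2) and (3) by a separate argument: an invariant generic projection $\rho\colon\C^m\to\C^2$ combined with the Kovalev--Yang theorem \cite{ky} (which rests on \cite{wi}), giving finiteness of $(Z'\cap\R^2)\setminus(\rho\circ g)(\sph^1)$. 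You replace both by a single ramification observation: writing $\Phi=\nu\circ\pi$, one has $\pi^{-1}(\{\pi(0),\pi(\infty)\})=\Phi^{-1}(\mathsf{H}_\infty(\C))=\{0,\infty\}$, so when $\pi(0)\neq\pi(\infty)$ the two fibers are the singletons $\{0\}$ and $\{\infty\}$, and your covering claim then yields $S=f(\sph^1)=\nu(\pi(\{|z|=1\}))=\cl_{\R\PP^m}^{\zar}(S)_{(1)}$ with no appeal to \cite{ky} at all --- a real divergence, since the paper explicitly calls that input crucial for exactly this step. Your route buys brevity and self-containedness; the paper's computation buys explicit normal forms for $(\Pi_0,\widetilde{F})$, which also feed Corollary \ref{ps1c}.

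Two caveats. First, the covering assertion is the crux of your forward implication and you state it without proof; a continuous map between circles need not be onto (compare $t\mapsto t^2$ on $\R\PP^1$, whose image is a proper arc), so the claim genuinely needs the total ramification. It does hold, in one line: since $\pi^{-1}(w_1)=\{0\}$ and $\pi^{-1}(w_2)=\{\infty\}$ with $w_1\neq w_2$, a M\"obius change of target coordinates sending $w_1\mapsto 0$ and $w_2\mapsto\infty$ makes the numerator of $\pi$ vanish only at $z=0$ and the denominator nowhere, i.e.\ $\pi(z)=cz^{d}$ (or $cz^{-d}$); the image of $\{|z|=1\}$ is then a full circle contained in the circle forming the real locus of the target, hence equal to it. This is exactly what separates case (1), where $\pi^{-1}(w)=\{0,\infty\}$ imposes no such rigidity. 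Second, two loose statements, both harmless: your aside that in case (1) ``$\Phi$ factors through $z\mapsto 1/z$'' is false in general --- e.g.\ $\pi(z)=z^2+z^{-2}+i\alpha\,(z-z^{-1})$ with $\alpha\in\R\setminus\{0\}$, i.e.\ $2\cos 2\theta-2\alpha\sin\theta$ on the circle, has $\pi^{-1}(\infty)=\{0,\infty\}$ and a single real point at infinity with irreducible germ, yet is not invariant under $z\mapsto 1/z$ --- but nothing in your argument uses it, since case (1) claims nothing beyond the germ condition. Likewise the ``main obstacle'' of your final paragraph is vacuous: in the converse the map you restrict to $\sph^1$ is the normalization $\nu$ itself, whose poles are exactly the displaced pair $\{0,\infty\}$ and whose invariance under $z\mapsto 1/\ol{z}$ automatically forces the Hermitian symmetry $c_{-k}=\ol{c_k}$; no proper subarc can occur because $\nu$ carries the whole real locus onto $\cl_{\R\PP^m}^{\zar}(S)_{(1)}$, which equals $S$ by hypothesis --- precisely how the paper's Cases 2 and 3 conclude.
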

\begin{remexs}\label{infty0}
Let $S\subset\R^m$ be a $1$-dimensional semialgebraic set.

(i) We will prove in Lemma \ref{infty1} that if $\cl_{\C\PP^m}^{\zar}(S)\cap\mathsf{H}^m_\infty(\C)=\{p\}$ and $\cl_{\C\PP^m}^{\zar}(S)_p$ is irreducible, then $\cl_{\C\PP^m}^{\zar}(S)\cap\R^m$ is unbounded (case (ii.1) above).

An example of this situation is $S:=[-1,1]\times\{0\}\subset\R^2$. We have $\cl_{\C\PP^2}^{\zar}(S)=\{\x_2=0\}$, $\cl_{\C\PP^2}^{\zar}(S)\cap\mathsf{H}^2_\infty(\C)=\{p:=[0:1:0]\}$ and $\cl_{\C\PP^2}^{\zar}(S)_p=\{x_2=0\}_p$ is irreducible. 

\begin{center}
\begin{figure}[ht!]
\begin{tikzpicture}[scale=3,domain=-pi:pi,samples=3000,smooth]
\draw[->,thick] (-1.2,0) -- (1.2,0) node[right] {$\x_1$};
\draw[->,thick] (0,-0.6) -- (0,0.6) node[above] {$\x_2$};
\draw (0,0) node {\huge$\bullet$};
\draw[color=black,line width=1.5pt] plot ({cos(\x r)},{cos(\x r)*sin(\x r)});
\end{tikzpicture}
\caption{Gerono's leminiscate\label{fig2}}
\end{figure}
\end{center}
\vspace*{-0.75cm}

(ii) We will prove in Lemma \ref{infty2} that if $\cl_{\C\PP^m}^{\zar}(S)\cap\mathsf{H}^m_\infty(\C)=\{p\}$ is a singleton and the analytic set germ $\cl_{\C\PP^m}^{\zar}(S)_p$ has exactly two irreducible components that are conjugated, then $\cl_{\C\PP^m}^{\zar}(S)\cap\R^m$ is bounded (case (ii.2) above).

An example of this situation is Gerono's leminiscate $S:=\{\x_2^2-\x_1^2+\x_1^4=0\}\subset\R^2$ ({\sc Figure} \ref{fig2}). Then $\cl_{\C\PP^2}^{\zar}(S)=\{\x_0^2(\x_2^2-\x_1^2)+\x_1^4=0\}$, $\cl_{\C\PP^2}^{\zar}(S)\cap\mathsf{H}^2_\infty(\C)=\{p:=[0:0:1]\}$ and $\cl_{\C\PP^2}^{\zar}(S)_p$ has two conjugated irreducible components parameterized by $\alpha:=[{\tt i}\frac{\t^2}{\sqrt{1-\t^2}}:\t:1]$ and $\ol{\alpha}:=[-{\tt i}\frac{\t^2}{\sqrt{1-\t^2}}:\t:1]$. Observe that $\cl_{\C\PP^2}^{\zar}(S)$ is the rational curve parameterized by 
$$
\Pi:\C\PP^1\to\cl_{\C\PP^2}^{\zar}(S),\ [\t_0:\t_1]\mapsto[(\t_0^2+\t_1^2)^2:\t_1^4-\t_0^4:2\t_0\t_1(\t_1^2-\t_0^2)]
$$ 
and $S$ is the image of $\sph^1$ under the polynomial map $f:\R^2\to\R^2,\ (x,y)\mapsto(x,xy)$.

(iii) If $\cl_{\C\PP^m}^{\zar}(S)\cap\mathsf{H}^m_\infty(\C)=\{q,\ol{q}\}$ (with $q\neq\ol{q}$) and the analytic set germs $\cl_{\C\PP^m}^{\zar}(S)_q$ and $\cl_{\C\PP^m}^{\zar}(S)_{\ol{q}}$ are irreducible and conjugated, then $\cl_{\C\PP^m}^{\zar}(S)\cap\R^m$ is bounded (case (ii.3) above).

As $\cl_{\C\PP^m}^{\zar}(S)\cap\mathsf{H}^m_\infty(\C)=\{q,\ol{q}\}$, we have $\cl_{\R\PP^m}^{\zar}(S)\cap\mathsf{H}^m_\infty(\R)=\varnothing$, so $\cl_{\R\PP^m}^{\zar}(S)=\cl_{\R\PP^m}^{\zar}(S)\cap\R^m$ is compact.

An example of this situation is $S:=\{\x_1^2+\x_2^2-1=0\}\subset\R^2$. Then $\cl_{\C\PP^2}^{\zar}(S)=\{\x_1^2+\x_2^2-\x_0^2=0\}$, $\cl_{\C\PP^2}^{\zar}(S)\cap\mathsf{H}^2_\infty(\C)=\{q:=[0:1:{\tt i}],\ol{q}:=[0:1:-{\tt i}]\}$ and the analytic set germs $\cl_{\C\PP^m}^{\zar}(S)_q$ and $\cl_{\C\PP^m}^{\zar}(S)_{\ol{q}}$ are irreducible, non-singular and conjugated. We have used that $\cl_{\C\PP^2}^{\zar}(S)$ is a non-singular invariant (complex) projective algebraic set.
\hfill$\sqbullet$
\end{remexs}

\subsubsection{Images of the unit circumference under Laurent polynomials}
The polynomial images of $\sph^1$ in $\R^2$ coincide with the images of $\sph^1$ under Laurent polynomials $f\in\C[\z,\z^{-1}]$ in one variable $\z$ with coefficients in $\C$. We refer the reader to \cite[Thm.2.1]{ky} (whose proof strongly relies on \cite{wi}) for a result that explores the algebraic structures of such images. This result is not fully conclusive, but it is crucial to analyze situations (ii.2) and (iii.3) of Theorem \ref{ps1}. As a consequence of Theorem \ref{ps1} we provide the full characterization of the images of $\sph^1$ under Laurent polynomials completing the valuable information provided in \cite[Thm.2.1]{ky}.

\begin{cor}\label{ps1c}
Let $S\subset\C\equiv\R^2$ be a $1$-dimensional semialgebraic set. The following conditions are equivalent:
\begin{itemize}
\item[(i)] There exists a Laurent polynomial $f\in\C[\z,\z^{-1}]$ such that $f(\sph^1)=S$.
\item[(ii)] $S$ is irreducible, compact and the Zariski closure $\cl_{\C\PP^2}^{\zar}(S)$ is an invariant rational curve such that one of the following three situations hold:
\begin{itemize}
\item[(1)] $\cl_{\C\PP^2}^{\zar}(S)\cap\mathsf{H}^2_\infty(\C)=\{p\}$ is a singleton (that belongs to $\mathsf{H}^2_\infty(\R)$) and the analytic set germ $\cl_{\C\PP^2}^{\zar}(S)_p$ is irreducible.
\item[(2)] $\cl_{\C\PP^2}^{\zar}(S)\cap\mathsf{H}^2_\infty(\C)=\{p\}$ is a singleton (that belongs to $\mathsf{H}^2_\infty(\R)$), the analytic set germ $\cl_{\C\PP^2}^{\zar}(S)_p$ has exactly two irreducible components that are conjugated, and $S=\cl_{\R\PP^2}^{\zar}(S)_{(1)}$.
\item[(3)] $\cl_{\C\PP^2}^{\zar}(S)\cap\mathsf{H}^2_\infty(\C)=\{q,\ol{q}\}$ (where $q,\ol{q}\not\in\mathsf{H}^2_\infty(\R)$), the analytic set germs $\cl_{\C\PP^2}^{\zar}(S)_q$ and $\cl_{\C\PP^2}^{\zar}(S)_{\ol{q}}$ are irreducible and conjugated, and $S=\cl_{\R\PP^2}^{\zar}(S)_{(1)}$.
\end{itemize}
\end{itemize}
\end{cor}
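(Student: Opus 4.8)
The plan is to deduce Corollary~\ref{ps1c} from Theorem~\ref{ps1} by showing that, in the special ambient dimension $m=2$, polynomial maps $\R^2\to\R^2$ restricted to $\sph^1$ are essentially the same objects as Laurent polynomials $f\in\C[\z,\z^{-1}]$ restricted to $\sph^1\subset\C\equiv\R^2$. Since conditions (ii) of both statements are \emph{verbatim} identical (with $m=2$), the entire content of the corollary reduces to the equivalence of its condition (i) with condition (i) of Theorem~\ref{ps1}, namely ${\rm p}_{\mathfrak S}(S)=1$. Thus the corollary is really a dictionary between two descriptions of the same class of maps, and Theorem~\ref{ps1} supplies the geometric characterization for free.

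The key step is the identification of parametrizations. First I would recall that a polynomial map $g:\R\to\R$ together with a polynomial map of $\sph^1$ means: a polynomial map $F:\R^2\to\R^2$, and its restriction to $\sph^1=\{x^2+y^2=1\}$. Writing $\z=x+iy$, a point of $\sph^1$ has $\z\ol{\z}=1$, i.e. $\ol{\z}=\z^{-1}$ on $\sph^1$. Hence any real polynomial expression in $(x,y)$ restricted to $\sph^1$ becomes, after substituting $x=\tfrac{1}{2}(\z+\z^{-1})$ and $y=\tfrac{1}{2i}(\z-\z^{-1})$, a Laurent polynomial in $\z$ evaluated on $\{|\z|=1\}$. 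Conversely, given a Laurent polynomial $f\in\C[\z,\z^{-1}]$, its real and imaginary parts on $\sph^1$ are, after the same substitution, genuine polynomials in $(x,y)$ restricted to $\sph^1$, so $f|_{\sph^1}$ is the restriction of a polynomial map $\R^2\to\R^2$. This establishes a bijection between the images in $\C\equiv\R^2$ of $\sph^1$ under Laurent polynomials and the polynomial images of $\sph^1$ in $\R^2$, which is exactly the assertion ``(i)$_{\text{Cor}}$ $\Longleftrightarrow$ ${\rm p}_{\mathfrak S}(S)=1$ with $m=2$''.

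Once this translation is in place, I would invoke Theorem~\ref{ps1} with $m=2$ directly: condition (i) there, ${\rm p}_{\mathfrak S}(S)=1$, is equivalent to its condition (ii), which is word-for-word the condition (ii) of the corollary. Chaining the dictionary with Theorem~\ref{ps1} closes the loop, and I would remark that the reference to \cite[Thm.2.1]{ky} (and through it \cite{wi}) is what makes the analysis of the delicate sub-cases (ii.2) and (ii.3)---where the curve $\cl^{\zar}_{\C\PP^2}(S)\cap\R^2$ is bounded and $S$ is a proper subset of the real curve---tractable, so those algebraic-structure inputs are already absorbed into the proof of Theorem~\ref{ps1}.

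The main obstacle I anticipate is not the geometry but the bookkeeping in the substitution step: one must check that the substitution $x=\tfrac{1}{2}(\z+\z^{-1})$, $y=\tfrac{1}{2i}(\z-\z^{-1})$ really gives a \emph{two-way} correspondence at the level of maps rather than just at the level of images, and in particular that no information is lost when passing from a map $\R^2\to\R^2$ to its restriction to $\sph^1$ (a polynomial vanishing on $\sph^1$, i.e.\ a multiple of $x^2+y^2-1$, restricts to zero and is invisible in $\z$). This is harmless for the statement, which only concerns images $S=f(\sph^1)$, but it must be stated carefully so that the equivalence of \emph{images} is not mistaken for an equivalence of \emph{maps}. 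Modulo that care, the corollary is a clean specialization of Theorem~\ref{ps1}.
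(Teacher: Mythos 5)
Your proposal is correct and follows essentially the same route as the paper: the author proves Corollary~\ref{ps1c} by applying Theorem~\ref{ps1} with $m=2$ together with the dictionary between restrictions to $\sph^1$ of Laurent polynomials and of polynomial maps $\R^2\to\R^2$, which the paper sets up in \S\ref{pclp} via exactly the substitution $\ol{\z}=\z^{-1}$ on $\sph^1$ (and its converse through real and imaginary parts) that you describe. Your closing caution that the correspondence is one of \emph{images} rather than of maps (polynomials divisible by $\x^2+\y^2-1$ being invisible on $\sph^1$) is sound and does not affect the argument, since only $S=f(\sph^1)$ matters.
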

\begin{remark}
In reference to \cite[Thm.2.1]{ky} observe that in case (ii.1) the Zariski closure $\cl^{\zar}_{\C\PP^2}(S)\cap\R^2$ is unbounded (use Lemma \ref{infty1}), so the difference $\cl_{\R\PP^2}^{\zar}(S)\setminus S$ is an infinite ($1$-dimensional semialgebraic) set (because $S$ is compact), whereas in cases (ii.2) and (ii.3) the Zariski closure $\cl_{\C\PP^2}^{\zar}(S)\cap\R^2$ is bounded (use Lemma \ref{infty2} and Remark \ref{infty0}(iii)) and the difference $\cl_{\R\PP^2}^{\zar}(S)\setminus S=\cl_{\R\PP^2}^{\zar}(S)_{(0)}$ is a finite set (maybe empty).
\hfill$\sqbullet$
\end{remark}

\subsubsection{Images of the unit spheres of higher dimension}
As a consequence of Theorem \ref{pb1} and Proposition \ref{ps2}, if we consider spheres $\sph^k$ for some $k\geq2$ instead of the circumference $\sph^1$, one realizes that the polynomial images of $\sph^k$ coincide with those of a closed interval. By \cite[Thm.2]{w} all polynomial maps $f:\sph^2\to\sph^1$ are constant. Consequently, by \cite[Lem.13.1.1]{bcr} all polynomial maps $f:\sph^k\to\sph^1$ are constant for each $k\geq2$. 

\begin{prop}\label{ps2}
Let $S\subset\R^m$ be a $1$-dimensional semialgebraic set. The following conditions are equivalent:
\begin{itemize}
\item[(i)] $S$ is the image of $\sph^2$ under a polynomial map $f:\R^3\to\R^m$.
\item[(ii)] $S$ is the image of $\sph^k$ under a polynomial map $f:\R^{k+1}\to\R^m$ for some $k\geq2$.
\item[(iii)] $S$ is irreducible, compact, $\cl_{\R\PP^m}^{\zar}(S)\cap\R^m$ is unbounded and $\cl_{\C\PP^m}^{\zar}(S)$ is an invariant rational curve such that the set of points at infinity $\cl_{\C\PP^m}^{\zar}(S)\cap\mathsf{H}^m_\infty(\C)$ is a singleton $\{p\}$ and the analytic set germ $\cl_{\C\PP^m}^{\zar}(S)_p$ is irreducible.
\end{itemize}
\end{prop}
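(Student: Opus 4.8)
The plan is to establish the cycle (i) $\Rightarrow$ (ii) $\Rightarrow$ (iii) $\Rightarrow$ (i). The implication (i) $\Rightarrow$ (ii) is immediate, taking $k=2$. For (iii) $\Rightarrow$ (i), note that condition (iii) in particular contains condition (iii) of Theorem \ref{pb1}, so by that theorem ${\rm p}_{\mathfrak B}(S)=1$ and there is a polynomial map $g\colon\R\to\R^m$ with $g([-1,1])=S$. Composing with the first-coordinate projection $\pi\colon\R^3\to\R$, $\pi(x_0,x_1,x_2)=x_0$, which restricts to a surjection $\sph^2\to[-1,1]$, the polynomial map $g\circ\pi\colon\R^3\to\R^m$ satisfies $(g\circ\pi)(\sph^2)=g([-1,1])=S$, which proves (i).

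The substance lies in (ii) $\Rightarrow$ (iii). Write $S=f(\sph^k)$ with $f\colon\R^{k+1}\to\R^m$ polynomial and $k\geq2$. As $\sph^k$ is compact and connected, so is $S$; and since a polynomial map is regular, $S$ is a regular image of $\sph^k$, so Theorem \ref{rb1s1} already yields that $S$ is irreducible and that $C:=\cl_{\C\PP^m}^{\zar}(S)$ is an invariant rational curve. It remains to locate its points at infinity and to control the germ there. First I would homogenize $f$ to a rational map $F=[x_0^d:F_1:\dots:F_m]\colon\R\PP^{k+1}\dashrightarrow\R\PP^m$ extending $f$, so that $C=\overline{F(\overline{\sph^k})}$, where $\overline{\sph^k}\subset\C\PP^{k+1}$ is the smooth quadric $\{x_1^2+\dots+x_{k+1}^2=x_0^2\}$. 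The key object is $Q_\infty:=\overline{\sph^k}\cap\{x_0=0\}=\{x_0=0,\ x_1^2+\dots+x_{k+1}^2=0\}$, the quadric at infinity of the sphere: precisely because $k\geq2$, this is an irreducible complex variety of dimension $k-1\geq1$. This is exactly where the hypothesis enters, and it is the algebraic reflection of the fact recalled above that polynomial maps $\sph^k\to\sph^1$ are constant.

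Next I would show that $C\cap\mathsf{H}_\infty(\C)$ is a single point. Any point of $\overline{\sph^k}$ whose $F$-image lies in $\mathsf{H}_\infty(\C)$ has vanishing first coordinate $x_0^d$, hence lies in $Q_\infty$; conversely $F(Q_\infty)\subset\mathsf{H}_\infty(\C)$. Thus $C\cap\mathsf{H}_\infty(\C)=\overline{F(Q_\infty)}$, which is irreducible, being the closure of the image of the irreducible $Q_\infty$. Since $C$ is a curve meeting $\R^m$ (it contains $S$), it is not contained in $\mathsf{H}_\infty(\C)$, so $C\cap\mathsf{H}_\infty(\C)$ is finite; being finite and irreducible it is a single point $\{p\}$, which is real by invariance of $C$. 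This gives the first half of (iii) and already rules out situation (3) of Theorem \ref{ps1}.

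Finally, to obtain the irreducibility of the germ $C_p$ (and with it the unboundedness of $\cl_{\R\PP^m}^{\zar}(S)\cap\R^m$), I would lift $F$ through the invariant normalization $\nu\colon\C\PP^1\to C$ to a rational map $\tilde F\colon\overline{\sph^k}\dashrightarrow\C\PP^1$ with $\nu\circ\tilde F=F$; this is possible because $\overline{\sph^k}$ is smooth and a rational map from a smooth variety to $\C\PP^1$ is a morphism off a set of codimension $\geq2$. After resolving indeterminacies I may assume $\tilde F$ is a surjective morphism. Writing $\nu^{-1}(p)=\{a_1,\dots,a_r\}$, the branches of $C_p$, the locus $F^{-1}(\mathsf{H}_\infty(\C))=Q_\infty$ decomposes as the disjoint union of the fibers $\tilde F^{-1}(a_j)$, each of dimension $\geq k-1=\dim Q_\infty$ by the fiber-dimension theorem. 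If $r\geq2$ this would exhibit the irreducible $Q_\infty$ as a union of at least two full-dimensional closed subsets meeting only in the indeterminacy locus (of dimension $\leq k-2$), a contradiction. Hence $r=1$, the germ $C_p$ is irreducible, and invariance forces $a_1\in\R\PP^1$, so $p=\nu(a_1)\in\nu(\R\PP^1)=\cl_{\R\PP^m}^{\zar}(S)_{(1)}$ lies at infinity and $\cl_{\R\PP^m}^{\zar}(S)\cap\R^m$ is unbounded. This is situation (1), namely condition (iii). I expect this last step — transferring the irreducibility of $Q_\infty$ to a single branch of $C$ at $p$ while carefully handling the indeterminacy loci of $F$ and $\tilde F$ — to be the main obstacle.
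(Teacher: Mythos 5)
Your cycle (i) $\Rightarrow$ (ii) $\Rightarrow$ (iii) $\Rightarrow$ (i) has the same skeleton as the paper, and your (iii) $\Rightarrow$ (i) (Theorem \ref{pb1} plus the projection $\sph^2\to[-1,1]$) is exactly the paper's argument. But your proof of (ii) $\Rightarrow$ (iii) has a genuine gap, precisely at the step you yourself flagged as the main obstacle, and as written it does not close. The map $F=[x_0^d:F_1:\cdots:F_m]$ is only a \emph{rational} map on the quadric $\ol{\sph^k}$, with indeterminacy locus $Z$ contained in $Q_\infty$ (where $x_0$ and all the leading forms vanish simultaneously; note also that the leading forms could all vanish identically on $Q_\infty$, so one must first cancel powers of $x_0$ in the coordinate ring of the quadric before ``$F(Q_\infty)$'' even makes sense). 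Because of this, the identity $\cl_{\C\PP^m}^{\zar}(S)\cap\mathsf{H}_\infty(\C)=\ol{F(Q_\infty)}$ is unjustified: after resolving the indeterminacy by $\hat\pi\colon W\to\ol{\sph^k}$, the set $\hat F^{-1}(\mathsf{H}_\infty(\C))$ contains, besides the strict transform of $Q_\infty$, exceptional components lying over $Z$, and these may map to \emph{additional} points of $C\cap\mathsf{H}_\infty(\C)$ that are not limits along $Q_\infty$. The same defect kills the final fiber-dimension argument: the fibers $\tilde F^{-1}(a_j)$ have dimension $k-1$ \emph{on $W$}, but a fiber can be entirely contained in the exceptional locus, in which case its image in $\ol{\sph^k}$ has dimension $\leq k-2$. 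Then $Q_\infty$ is covered by one fiber-image equal to all of $Q_\infty$ plus small exceptional images, and irreducibility of $Q_\infty$ yields no contradiction for $r\geq2$. So the decomposition of $Q_\infty$ ``into at least two full-dimensional closed subsets'' is exactly what fails; you have silently conflated fibers on $W$ with their images downstairs.

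The gap is fillable, but it needs a new ingredient: all fibers $\tilde F^*(a)$ are algebraically equivalent divisors on $W$, so intersecting with $(\hat\pi^*H)^{k-1}$ ($H$ the hyperplane class) and using the projection formula, a purely exceptional fiber would have degree $0$ against an ample class while the generic fiber (closure of an affine fiber) has positive degree --- hence no fiber is purely exceptional, and your argument then runs. It is worth contrasting this with the paper's route, which avoids the indeterminacy issue altogether: the paper restricts $f$ to a suitable great circle $H\cap\sph^k$, applies Theorem \ref{ps1} to $T=f(H\cap\sph^k)$ to get the trichotomy (1)--(3), and then excludes (2) and (3) by a real-topological argument: using Chevalley's theorem, $F(Y\cap\C^{k+1})=X\setminus E$ with $E$ finite, and since the smooth affine quadric $Y\cap\C^{k+1}$ is normal, the universal property of normalization lifts $F$ to an invariant regular map $F^*$ into $\C\PP^1$ minus a finite set; then $F^*(\sph^k)$ is a compact connected subset of the real line, i.e., an interval, so $F(\sph^k)=\Pi([-1,1])\subsetneq\Pi(\R\PP^1)=S$ because $\Pi$ is generically one-to-one --- contradicting $f(\sph^k)=S$, which in cases (2),(3) equals $\cl_{\R\PP^m}^{\zar}(S)_{(1)}$. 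Your approach, once repaired, is a genuinely different and purely complex-geometric mechanism (irreducibility of the quadric at infinity for $k\geq2$), whereas the paper exploits connectedness and compactness of the real sphere; but as submitted, the proof of (ii) $\Rightarrow$ (iii) is incomplete.
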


\begin{remarks}
Proposition \ref{ps2} alternatively proves that all polynomial maps $f:\sph^k\to\sph^1$ are constant if $k\geq2$ (see also \cite[Thm.2]{w} and \cite[Lem.13.1.1]{bcr}).
\end{remarks}

\subsection*{Acknowledgements}
This article was written while supervising the related Bachelor's Thesis of Alejandro G\'omez G\'omez entitled ``Im\'agenes polin\'omicas y regulares de dimensión 1''. The author is indebted to S. Schramm for a careful reading of the final version and for the suggestions to refine its redaction. The author also thanks the anonymous referee for very valuable suggestions to correct several inaccuracies and incomplete arguments. These suggestions have notably improved the manuscript and made the final version of this article clearer and more precise. 

\section{Main tools}\label{s2}

In this section we present the main tools used to prove the results proposed in this article. We will use usual concepts of (complex) Algebraic Geometry such as: rational map, regular map, normalization, etc. and refer the reader to \cite{m1,m2,sh1} for further details. We begin proving that compact algebraic sets of dimension $\geq1$ are not images of compact subsets $K\subset\R^n$ with non-empty interior under polynomial maps.

\begin{lem}\label{boundedk}
Let $X\subset\R^m$ be a compact algebraic set and $f:=(f_1,\ldots,f_m):\R^n\to\R^m$ a polynomial map. Let $K\subset\R^n$ be a compact set with non-empty interior in $\R^n$ such that $f(K)\subset X$. Then $f(K)$ is a singleton contained in $X$.
\end{lem}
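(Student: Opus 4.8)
The plan is to upgrade the hypothesis $f(K)\subset X$, which only concerns the compact set $K$, into the global statement $f(\R^n)\subset X$, and then to exploit the compactness of $X$. First I would record the elementary principle that, since $K$ has non empty interior in $\R^n$, any polynomial $h\in\R[\x]:=\R[\x_1,\ldots,\x_n]$ that vanishes on $K$ is identically zero: indeed $h$ then vanishes on a non empty Euclidean open subset of the irreducible variety $\R^n$, so $h=0$.

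Next, writing $I(X)\subset\R[\y]:=\R[\y_1,\ldots,\y_m]$ for the ideal of polynomials vanishing on $X$ (here $X\subset\R^m$, since $f(K)\subset\R^m$), I would observe that for each $g\in I(X)$ the composition $g\circ f\in\R[\x]$ vanishes on $K$, because $f(K)\subset X\subset\{g=0\}$. By the previous paragraph $g\circ f\equiv 0$ on $\R^n$, and as $g$ ranges over $I(X)$ this says precisely that $f(\R^n)\subset X$. In particular $f(\R^n)$ is bounded, because $X$ is compact.

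It then remains to see that a polynomial map $f:\R^n\to\R^m$ with bounded image is constant; granting this, $f(K)\subset f(\R^n)$ is a singleton contained in $X$, as desired. This is exactly \cite[Rem.1.3(3)]{fg1}, but it is also immediate: for any $a,v\in\R^n$ each component $t\mapsto f_i(a+tv)$ is a one variable real polynomial with bounded image, hence constant, so $f$ is constant along every affine line and therefore constant on the connected set $\R^n$.

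The only step requiring a moment's care is the passage from $f(K)\subset X$ to $f(\R^n)\subset X$; once the vanishing-on-an-open-set principle is in place this is automatic, and the rest is routine. I would \emph{emphasize} that both hypotheses on $X$ are genuinely used: its algebraicity is what allows one to pull the containment back through the ideal $I(X)$, while its compactness is what forces the (a priori possibly unbounded) image $f(\R^n)$ to be bounded, and hence a single point.
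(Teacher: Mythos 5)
Your proof is correct, and it takes a genuinely different route from the paper's. Both arguments pivot on upgrading $f(K)\subset X$ to the global containment $f(\R^n)\subset X$ and then invoking the fact that a polynomial map on $\R^n$ with bounded image is constant, but you obtain the global containment by an elementary ideal pullback: for each $g\in I(X)$ the polynomial $g\circ f$ vanishes on $K$, which has non-empty interior, hence $g\circ f\equiv0$, so $f(\R^n)\subset Z(I(X))=X$ --- the last equality being exactly where the algebraicity of $X$ enters (if $X=Z(J)$ then $J\subset I(X)$ gives $Z(I(X))\subset Z(J)=X$), and it would be worth writing out that one line, which you only gesture at. The paper instead argues through its irreducibility machinery: it takes an open ball $\Bb\subset K$, notes that $f(\Bb)$ and $f(\R^n)$ are irreducible semialgebraic sets whose Zariski closures are irreducible algebraic sets of the same dimension (citing \cite{fg3} and \cite[Lem.7.1]{fe2}), and deduces from $f(\Bb)\subset f(\R^n)$ and the identity principle that these closures coincide, whence $f(\R^n)\subset X$; it then cites \cite[Rem.1.3(3)]{fg1} for the boundedness step. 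Your approach buys elementarity and self-containedness --- nothing beyond the fact that a polynomial vanishing on a non-empty open set is zero, plus the restriction-to-lines argument for the final step, which you even reprove rather than merely cite --- while the paper's version embeds the lemma in the dimension-theoretic framework it uses throughout, at the cost of two nontrivial external references.
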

\begin{proof}
Let $\Bb\subset K$ be a non-empty open ball. We claim: {\em The Zariski closures of both $f(\R^n)$ and $f(\Bb)$ coincide}. 

If $g\in\R[\x_1,\ldots,\x_m]$ satisfies $g(f(\Bb))=0$, then $(g\circ f)|_{\Bb}=0$, so by the Identity Principle $g\circ f=0$ on $\R^n$ and $g(f(\R^n))=0$. Thus, $\cl^{\zar}(f(\R^n))\subset\cl^{\zar}(f(\Bb))\subset\cl^{\zar}(f(\R^n))$, so $\cl^{\zar}(f(\R^n))=\cl^{\zar}(f(\Bb))$.

As $f(\Bb)\subset f(K)\subset X$ and $X$ is an algebraic set, $f(\R^n)\subset\cl^{\zar}(f(\R^n))=\cl^{\zar}(f(\Bb))\subset X$. As $X$ is bounded, we deduce $f(\R^n)$ is bounded, so it is by \cite[Rem.1.3(3)]{fg1} a singleton (contained in $X$), as required. 
\end{proof}

We will use freely along this work the existence of regular maps $f:\R^n\to\R^{n+1}$ such that $f(\ol{\Bb}_n)=\sph^n$ (see \cite[Cor.2.9 \& Lem.A.4]{fu6}). We recall here an explicit regular map $f:\R\to\R^2$ such that $f(\ol{\Bb}_1)=\sph^1$ for the case $n=1$.

\begin{example}\label{s1rp1}
Let us show that $\sph^1$ and $\R\PP^1$ are regular images of $[-1,1]$. Since $\R\PP^1$ is the image of $\sph^1$ via the canonical projection $\pi:\sph^1\to\R\PP^1$, it is enough to prove that $\sph^1$ is a regular image of $[-1,1]$. To that end, we may take for instance the regular map
$$
f:\R\to\sph^1,\ t\mapsto\Big(\Big(\frac{2t}{t^2+1}\Big)^2-\Big(\frac{t^2-1}{t^2+1}\Big)^2,2\Big(\frac{2t}{t^2+1}\Big)\Big(\frac{t^2-1}{t^2+1}\Big)\Big),
$$
which satisfies $f(\ol{\Bb}_1)=f([-1,1])=\sph^1$. The previous map $f$ is the composition of the inverse of the stereographic projection 
$$
\varphi:\R\to\sph^1,\ t\mapsto\Big(\frac{2t}{t^2+1},\frac{t^2-1}{t^2+1}\Big)
$$
of $\sph^1$ from $(0,1)$ with 
$$
g:\C\equiv\R^2\to\C\equiv\R^2,\ z=x+{\tt i}y\equiv(x,y)\mapsto z^2\equiv(x^2-y^2,2xy).
$$
$ $\hfill$\sqbullet$ 
\end{example}

We recall the following useful and well-known fact concerning the regularity of rational maps defined on a non-singular curve \cite[Prop.(7.1)]{m1} that will be used several times. As a straightforward consequence of the following result applied to $Z=\C\PP^1$, the reader can deduce (alternatively to \cite[Prop.3.5]{fhmm}) that each regulous map $f:\R\to\R^m$ is in fact a regular map.

\begin{lem}\label{curve}
Let $Z\subset\C\PP^n$ be a non-singular projective curve and $F:Z\dashrightarrow\C\PP^m$ a rational map. Then $F$ can be (uniquely) extended to a regular map $F':Z\to\C\PP^m$. Moreover, if $Z,F$ are invariant, then also $F'$ is invariant.
\end{lem}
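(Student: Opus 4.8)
The plan is to reduce the statement to a purely local analysis at each point where $F$ fails to be defined, exploiting that a non-singular curve has discrete valuation rings as its local rings, and then to deduce the invariance clause formally from the uniqueness of the extension. Conceptually this is just the valuative criterion of properness for $\C\PP^m$, but on a smooth curve it can be made completely explicit.

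First I would recall that a rational map $F:Z\dashrightarrow\C\PP^m$ is, by definition, regular on a dense open subset $U\subset Z$, and since $Z$ is an irreducible curve the complement $Z\setminus U$ consists of finitely many points. It therefore suffices to extend $F$ regularly across a single point $p\in Z\setminus U$, the general statement following by treating each such point in turn and invoking uniqueness to glue the local extensions. Fix such a point $p$. Because $Z$ is non-singular of dimension $1$, the local ring $\mathcal{O}_{Z,p}$ is a discrete valuation ring; let $v$ denote its valuation and $t$ a uniformizer. Writing $F$ near $p$ in homogeneous coordinates $F=(F_0:\cdots:F_m)$ with each $F_i\in\C(Z)$ and not all identically zero, I would set $e:=\min\{v(F_i):F_i\neq0\}$ and replace the coordinates by $G_i:=t^{-e}F_i$. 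This does not change the map, since the $G_i$ differ from the $F_i$ by the single nonzero factor $t^{-e}$. Now every $G_i$ lies in $\mathcal{O}_{Z,p}$ (as $v(G_i)\geq0$) and at least one index $j$ satisfies $v(G_j)=0$, hence $G_j(p)\neq0$. Consequently $(G_0(p):\cdots:G_m(p))$ is a well-defined point of $\C\PP^m$, and this provides the regular extension $F'(p)$. Uniqueness is immediate: two regular maps into the separated (Hausdorff) space $\C\PP^m$ that coincide on the dense set $U$ must coincide everywhere.

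Finally, for the invariance statement, I would observe that the locus of definition of $F$ is invariant because $F$ is, so $U$ may be taken invariant. The two regular maps $\sigma_m\circ F'\circ\sigma_n$ and $F'$ then extend the same rational map: on $U$ one has $\sigma_m\circ F\circ\sigma_n=F$ by hypothesis, and $\sigma_m\circ F'\circ\sigma_n$ is again a regular morphism, since pre- and post-composition with complex conjugation carries morphisms to morphisms. By the uniqueness just established, $\sigma_m\circ F'\circ\sigma_n=F'$, which is exactly the invariance of $F'$.

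I expect the only genuinely delicate step to be the local extension at $p$: one must check that scaling the homogeneous coordinates by $t^{-e}$ is legitimate (it is, being multiplication by a single nonzero rational function) and that the resulting point is independent of the chosen coordinate representative for the class $F$. Everything else—finiteness of the indeterminacy locus, uniqueness via separatedness of $\C\PP^m$, and the formal deduction of invariance—is routine once the discrete valuation ring structure of $\mathcal{O}_{Z,p}$ is in hand.
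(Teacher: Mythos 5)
Your proof is correct and takes essentially the approach the paper intends: the paper gives no proof of this lemma, quoting it from Mumford \cite[Prop.(7.1)]{m}, and the proof there is precisely your local argument at each indeterminacy point (use that $\mathcal{O}_{Z,p}$ is a discrete valuation ring, divide the homogeneous components by the minimal power of a uniformizer so all become regular at $p$ with at least one a unit, then glue via uniqueness). Your derivation of the invariance clause from uniqueness of the extension, noting that $\sigma_m\circ F'\circ\sigma_n$ is again regular and agrees with $F$ on an invariant dense open set, is exactly the implicit argument the paper relies on, so there is nothing to correct.
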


\subsection{Normalization of an algebraic curve}\label{nac} 
A main tool to prove the results of this article will be the normalization of either affine or projective algebraic curves $X$ of either $\C^n$ or $\C\PP^n$. We refer the reader to \cite[Ch.II.\S5]{sh1} and \cite[III.\S9]{m2} for a detailed exposition. Let $X$ be an either affine or projective algebraic curve $X$ of either $\E_\C^n:=\C^n$ or $\C\PP^n$ and denote the set of singular points of $X$ with $\Sing(X)$. The normalization $(\widetilde{X},\Pi)$ of $X$ is a pair constituted by a non-singular algebraic set $\widetilde{X}\subset\E_\C^k$ and a (birational) regular map $\Pi:\widetilde{X}\to X$ such that the restriction $\Pi|_{\widetilde{X}\setminus\Pi^{-1}(\Sing(X))}:\widetilde{X}\setminus\Pi^{-1}(\Sing(X))\to X\setminus\Sing(X)$ is a biregular diffeomorphism. The normalization is unique up to a biregular diffeomorphism. Recall that all fibers of $\Pi:\widetilde{X}\to X$ are finite and if $x\in X$ is a non-singular point, then the fiber of $x$ is a singleton. If $X$ is a complex algebraic curve, the cardinal of the fiber of a point $x\in X$ coincides with the number of irreducible components of the analytic set germ $X_x$. If $\Pi^{-1}(x):=\{z_1,\ldots,z_r\}$, the irreducible components of the analytic set germ $X_x$ are $\Pi(\widetilde{X}_{z_1}),\ldots,\Pi(\widetilde{X}_{z_r})$. 

Denote the complex conjugation of $\E_\C^n$ with $\sigma_n$. If $X$ is an invariant complex algebraic curve, we may assume that both $\widetilde{X}$ and $\Pi$ are also invariant. To prove this, one can construct $(\widetilde{X},\Pi)$ as the desingularization of $X$ via a finite chain of suitable invariant blowing-ups.

Denote with $\E_\R^m$ either $\R^m$ or $\R\PP^m$, let $X\subset\E_\R^m$ be a real algebraic curve and denote $Y:=\cl_{\E_\C^m}^{\zar}(X)$. Let $(\widetilde{Y}\subset\E_\C^k,\Pi)$ be an invariant normalization of $Y$. We claim:
\begin{itemize}
\item[($\bullet$)] If $\widetilde{Z}:=\widetilde{Y}\cap\E_\R^k$ and $Z:=\cl_{\E_\R^m}^{\zar}(X)$, then $\Pi(\widetilde{Z})=Z_{(1)}$.
\end{itemize}
\begin{proof}
Pick a non-singular point $z\in Z$. Then there exists a unique point $w\in\widetilde{Y}$ such that $\Pi(w)=z$. As $\Pi$ is invariant, $\Pi(\sigma_k(w))=\sigma_m(\Pi(w))=\sigma_m(z)=z$, so $w=\sigma_k(w)\in\widetilde{Z}$ (because the fiber of $z$ is a singleton). Thus, $Z_{(1)}\setminus\Sing(Z)\subset\Pi(\widetilde{Z})$. As $\Pi|_{\widetilde{Z}}$ is proper and $Z_{(1)}\setminus\Sing(Z)$ is dense in $Z_{(1)}$, we have $Z_{(1)}\subset\Pi(\widetilde{Z})$. As $\widetilde{Y}$ is an invariant non-singular projective algebraic curve, the intersection $\widetilde{Z}:=\widetilde{Y}\cap\E_\R^k$ is a non-singular projective real algebraic curve, so it is pure dimensional of dimension $1$. As $\Pi^{-1}(\Sing(Y))$ is a finite set and $\Pi|_{\widetilde{Y}\setminus\Pi^{-1}(\Sing(Y))}:\widetilde{Y}\setminus\Pi^{-1}(\Sing(Y))\to Y\setminus\Sing(Y)$ is a biregular isomorphism, we deduce that $\Pi(\widetilde{Z}\setminus\Pi^{-1}(\Sing(Y)))\subset Z_{(1)}$ (because $\widetilde{Z}$ is pure dimensional of dimension $1$ and $\Pi^{-1}(\Sing(Y))$ is a finite set). As $Z_{(1)}$ is a closed semialgebraic set, $\widetilde{Z}\setminus\Pi^{-1}(\Sing(Y))$ is dense in $\widetilde{Z}$ and $\Pi|_{\widetilde{Z}}$ is continuous, we deduce $\Pi(\widetilde{Z})\subset Z_{(1)}$, so $\Pi(\widetilde{Z})=Z_{(1)}$, as required.
\end{proof}

We recall the following result concerning normalizations of invariant rational curves.

\begin{cor}\label{cp1}
Let $X\subset\C\PP^m$ be an invariant rational curve. Let $\Pi:\widetilde{X}\to X$ be an invariant normalization of $X$, where $\widetilde{X}\subset\C\PP^n$ is an invariant non-singular algebraic curve. Then $\C\PP^1$ and $\widetilde{X}$ are biregularly diffeomorphic, so we may assume $\widetilde{X}=\C\PP^1$. 
\end{cor}
\begin{proof}
As $X$ is an invariant rational curve, $X$ is the image of $\C\PP^1$ under an invariant birational (regular) map $\varphi:\C\PP^1\to X$. Thus, there exists a birational map $\psi:=(\Pi|_{\Pi^{-1}(X\setminus\Sing(X))})^{-1}\circ\varphi|_{\varphi^{-1}(X\setminus\Sing(X))}$ between $\C\PP^1$ and $\widetilde{X}$. As both $\C\PP^1$ and $\widetilde{X}$ are non-singular, we deduce by Lemma \ref{curve} that $\psi$ extends to $\C\PP^1$ as a regular map and $\psi^{-1}$ extends to $\widetilde{X}$ as a regular map too. Consequently, $\C\PP^1$ and $\widetilde{X}$ are biregularly diffeomorphic, so we may assume $\widetilde{X}=\C\PP^1$, as required.
\end{proof}

The following two results borrowed from \cite{fe1} (without proof) are crucial to prove the Main results stated in the Introduction.

\begin{lem}[{\cite[Lem.2.2]{fe1}}]\label{fact1}
Let $f:\R\to\R^m$ be a non-constant rational map and $S:=f(\R)$. Then 
\begin{itemize}
\item[(i)] $f$ can be (uniquely) extended to an invariant regular map $F:\C\PP^1\to\C\PP^m$ such that $F(\C\PP^1)=\cl_{\C\PP^m}^{\zar}(S)$.
\item[(ii)] $\cl_{\C\PP^m}^{\zar}(S)$ is an invariant rational curve and if $(\C\PP^1,\Pi)$ is an invariant normalization of $\cl_{\C\PP^m}^{\zar}(S)$, there exists an invariant surjective regular map $\widetilde{F}:\C\PP^1\to\C\PP^1$ such that $F=\Pi\circ\widetilde{F}$.
\item[(iii)] If $f$ is polynomial, we may choose $\Pi$ and $\widetilde{F}$ such that $\pi:=\Pi|_{\R}$ and $\widetilde{f}:=\widetilde{F}|_{\R}$ are polynomial. In particular, $\cl_{\C\PP^m}^{\zar}(S)\cap\mathsf{H}^m_\infty(\C)$ is a singleton $p$ and the analytic set germ $\cl_{\C\PP^m}^{\zar}(S)_p$ is irreducible.
\end{itemize}
\end{lem}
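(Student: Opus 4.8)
The plan is to derive all three parts from the homogenization of $f$, the extension criterion of Lemma \ref{curve}, and the universal property of the normalization, taking care that every object produced is invariant under the conjugations $\sigma_n$.

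For (i), write $f=(f_1,\dots,f_m)$ with each $f_i\in\R(\t)$ and regard $\t$ as an affine coordinate on $\C\PP^1$. Homogenizing the $f_i$ gives a rational map $\C\PP^1\dashrightarrow\C\PP^m$ with real coefficients; since $\C\PP^1$ is a non-singular projective curve, Lemma \ref{curve} extends it to an invariant regular map $F:\C\PP^1\to\C\PP^m$, and this extension is unique because two morphisms from an irreducible variety that agree on a Zariski-dense set coincide. As $\C\PP^1$ is complete, $F(\C\PP^1)$ is an irreducible Zariski-closed subset of $\C\PP^m$; and since $\R$ is Zariski dense in $\C\PP^1$ and $F|_\R=f$, the set $S=f(\R)$ is Zariski dense in $F(\C\PP^1)$. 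Hence $F(\C\PP^1)=\cl^{\zar}_{\C\PP^m}(S)$.

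For (ii), put $C:=\cl^{\zar}_{\C\PP^m}(S)=F(\C\PP^1)$: it is irreducible and invariant, and its function field embeds into $\C(\C\PP^1)=\C(\t)$, so by L\"uroth's theorem $C$ is rational and its normalization is a copy of $\C\PP^1$. I would then factor $F$ through the normalization: since $F$ is a morphism from the smooth, hence normal, curve $\C\PP^1$ onto $C$, the universal property of $\Pi:\C\PP^1\to C$ yields a unique $\widetilde F:\C\PP^1\to\C\PP^1$ with $F=\Pi\circ\widetilde F$. Non-constancy of $F$ forces $\widetilde F$ to be non-constant, hence surjective, as a morphism of projective curves is either constant or onto; and comparing the factorization with its $\sigma$-conjugate and using uniqueness together with the invariance of $F$ and $\Pi$ shows that $\widetilde F$ is invariant.

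For (iii), assume $f_i\in\R[\t]$. Then $f$ takes finite values at every finite $\t$, so $F^{-1}(\mathsf H_\infty(\C))=\{p_\infty\}$; consequently $p:=F(p_\infty)$ is the unique point of $C$ at infinity (and it is real, since $p_\infty$ is fixed by $\sigma_1$ and $F$ is invariant), and $F^{-1}(p)=\{p_\infty\}$. Applying $\widetilde F$ to this and using its surjectivity gives $\Pi^{-1}(p)=\widetilde F\bigl(\widetilde F^{-1}(\Pi^{-1}(p))\bigr)=\widetilde F(F^{-1}(p))=\{\widetilde F(p_\infty)\}$, a single point; by the recalled count of the fibers of a normalization this says precisely that the germ $C_p$ is irreducible, which is the final assertion. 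Since $\Pi^{-1}(p)$ is a single real point, an invariant automorphism of the source $\C\PP^1$ moves it to $p_\infty$; with this choice $\Pi$ carries the affine line into $C\setminus\{p\}\subset\C^m$ and is therefore polynomial, and $\widetilde F$ satisfies $\widetilde F^{-1}(p_\infty)=\{p_\infty\}$, hence also maps the affine line to the affine line and is polynomial. Their restrictions $\pi=\Pi|_\R$ and $\widetilde f=\widetilde F|_\R$ are then real polynomial maps. The main obstacle is the fiber bookkeeping in (iii): one must show that the single-point condition $F^{-1}(p)=\{p_\infty\}$ propagates through the factorization to force $\Pi^{-1}(p)$ to be one point (whence the irreducibility of the germ) and simultaneously permits the polynomial normalization of both $\Pi$ and $\widetilde F$; keeping everything invariant under $\sigma_n$ at each extension and factorization is the recurring technical point.
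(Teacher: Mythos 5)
Your proof is correct, and since the paper itself states this lemma as borrowed from \cite{fe1} \emph{without proof}, the only fair comparison is with the machinery the paper recalls and then redeploys in Section \ref{s3}: your route (extension via Lemma \ref{curve}, factorization through an invariant normalization by the universal property with invariance forced by uniqueness, the recalled fiber-count of the normalization to read off irreducibility of the germ $\cl_{\C\PP^m}^{\zar}(S)_p$, and an invariant M\"obius change of coordinates placing $\Pi^{-1}(p)$ and $\widetilde{F}^{-1}(p_\infty)$ at $p_\infty$ so that $\Pi|_{\R}$ and $\widetilde{F}|_{\R}$ become polynomial) is exactly that machinery, mirroring the way the paper argues in the proofs of Theorems \ref{pb1} and \ref{ps1}. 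One half-line worth adding in (iii): finiteness of $f$ on $\C$ only yields $F^{-1}(\mathsf{H}_\infty(\C))\subset\{p_\infty\}$, so you should note that $F(p_\infty)\in\mathsf{H}_\infty(\C)$ because $f$ is a non-constant polynomial map (equivalently, the projective curve $\cl_{\C\PP^m}^{\zar}(S)$ cannot avoid $\mathsf{H}_\infty(\C)$, since a complete subvariety of $\C^m$ of dimension $\geq1$ is impossible), which upgrades the inclusion to the equality $F^{-1}(\mathsf{H}_\infty(\C))=\{p_\infty\}$ that your fiber bookkeeping uses.
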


\begin{lem}[{\cite[Lem.2.3]{fe1}}]\label{fact2}
Let $f:=(f_1,\ldots,f_m):\R^n\to\R^m$ be a non-constant rational map such that its image $f(\R^n)$ has dimension $1$. Then
\begin{itemize}
\item[(i)] $f$ factors through $\R$, that is, there exist a rational function $g\in\R(\x)$ and a rational map $h:\R\to\R^m$ such that $f=h\circ g$.
\item[(ii)] If $f$ is in addition a polynomial map, we may also assume that $g$ and $h$ are polynomial.
\end{itemize}
\end{lem}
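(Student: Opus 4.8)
The plan is to treat both parts through the single irreducible affine curve $C:=\cl_{\R\PP^m}^{\zar}(S)\cap\R^m$ carrying $S:=f(\R^n)$, transferring all the one-variable information we need from Lemma \ref{fact1} by restricting $f$ to a well chosen line. First note that $S$ is irreducible (it is the image of the irreducible set $\R^n$ under a rational map, by \cite[(3.1)(iv)]{fg3}), so its Zariski closure $C$ is an irreducible real algebraic curve of dimension $1$. Since $f$ is non-constant, there is an affine line $\ell\subset\R^n$, parametrized by some affine-linear $\R\to\R^n,\ s\mapsto a+sb$, on which $f$ is non-constant: otherwise $f$ would be constant on every line through a regular point $p_0$ of $f$, hence constant near $p_0$, a contradiction. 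The restriction $\varphi:=f|_\ell:\R\dashrightarrow\R^m$ is then a non-constant rational map with image inside $C$; as $C$ is irreducible of dimension $1$, its Zariski closure is all of $C$, so the whole of $C$ is already reached along $\ell$.

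For part (i) I would apply Lemma \ref{fact1}(i)--(ii) to $\varphi$: the curve $\cl_{\C\PP^m}^{\zar}(C)$ is an invariant rational curve admitting an invariant normalization $\Pi:\C\PP^1\to\cl_{\C\PP^m}^{\zar}(C)$. Restricting $\Pi$ to the fixed-point set $\R\PP^1$ of the conjugation yields an invariant birational parametrization $h:\R\dashrightarrow C$, i.e.\ an isomorphism of function fields $h^*:\R(C)\xrightarrow{\sim}\R(\s)$ over $\R$. On the other hand the dominant map $f:\R^n\dashrightarrow C$ induces an inclusion $f^*:\R(C)\hookrightarrow\R(\x_1,\ldots,\x_n)$. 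Letting $g\in\R(\x)$ be the image of $\s$ under $f^*\circ(h^*)^{-1}$ (equivalently $g:=h^{-1}\circ f$) produces a rational function whose comorphism satisfies $(h\circ g)^*=g^*\circ h^*=f^*$; since a dominant rational map is determined by its comorphism, $f=h\circ g$, the desired factorization through $\R$.

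For part (ii), with $f$ polynomial, the restriction $\varphi=f|_\ell$ is now a non-constant \emph{polynomial} map $\R\to\R^m$, so Lemma \ref{fact1}(iii) applies and gives more: the parametrization may be chosen so that $\pi:=\Pi|_\R:\R\to\R^m$ is polynomial and, crucially, $\cl_{\C\PP^m}^{\zar}(C)\cap\mathsf{H}_\infty(\C)$ is a single point $p$ with irreducible germ. A single branch at $p$ means, by the fibre count recalled in the normalization subsection, that $\Pi^{-1}(p)$ is one point; hence the normalization of the \emph{affine} complexified curve $C_\C$ is $\C\PP^1$ minus one point, i.e.\ $\A^1_\C$, and $\pi$ realizes this normalization. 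Now I would invoke the universal property of normalization: the complexification $f_\C:\A^n_\C\to C_\C$ is a dominant morphism from the smooth, hence normal, variety $\A^n_\C$, so it factors uniquely as $f_\C=\pi\circ g_\C$ for a morphism $g_\C:\A^n_\C\to\A^1_\C$, that is, for a polynomial $g_\C\in\C[\x]$. Because $f_\C$ and $\pi$ are invariant, uniqueness forces $g_\C$ to be invariant, so $g:=g_\C$ has real coefficients; taking $h:=\pi$ we obtain $f=h\circ g$ with both $g\in\R[\x]$ and $h$ polynomial.

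The only genuinely delicate point is part (ii): upgrading the a priori \emph{rational} $g$ of part (i) to a \emph{polynomial} one. Everything hinges on the affine normalization of $C$ being the affine line $\A^1$ rather than $\C\PP^1$ with several points deleted; this is exactly the ``single point at infinity with irreducible germ'' conclusion of Lemma \ref{fact1}(iii), which is available only because $f$—and therefore $\varphi=f|_\ell$—is polynomial. Once that structural fact is in hand, the universal property of the normalization of $C_\C$, applied to the globally defined morphism $f_\C$ from the normal variety $\A^n_\C$, produces the polynomial lift automatically, and invariance lets us descend from $\C$ to $\R$.
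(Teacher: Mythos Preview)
The paper does not supply its own proof of this lemma: immediately before Lemmas~\ref{fact1} and~\ref{fact2} it states that both are ``borrowed from \cite{fe1} (without proof)''. So there is nothing in the present paper to compare your argument against.

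That said, your proposal is correct and is essentially the natural proof one would expect. For part (i), restricting $f$ to a generic affine line produces a non-constant one-variable rational map to which Lemma~\ref{fact1} applies, showing that $\cl_{\C\PP^m}^{\zar}(S)$ is an invariant rational curve; the factorization $f=h\circ g$ is then pure function-field algebra, since a birational parametrization $h$ identifies $\R(C)$ with $\R(\s)$ and the dominant map $f$ gives the embedding $\R(C)\hookrightarrow\R(\x)$. For part (ii) you correctly isolate the key structural input: Lemma~\ref{fact1}(iii), applied to the \emph{polynomial} restriction $f|_\ell$, forces $\cl_{\C\PP^m}^{\zar}(S)\cap\mathsf{H}_\infty(\C)$ to be a single point with irreducible germ, so the affine normalization of $C_\C$ is $\A^1_\C$ (not $\A^1_\C$ with further points removed). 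The universal property of normalization, applied to the dominant morphism $f_\C:\A^n_\C\to C_\C$ from the smooth (hence normal) source, then yields a \emph{regular} lift $g_\C:\A^n_\C\to\A^1_\C$, i.e.\ a polynomial; uniqueness of this lift combined with the invariance of $f_\C$ and $\pi$ forces $g_\C\in\R[\x]$.

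Two small points you pass over quickly but which are fine: first, $f_\C(\A^n_\C)\subset C_\C$ because any real polynomial vanishing on $C$ composes with $f$ to a polynomial vanishing on all of $\R^n$, hence identically; second, $g$ in part (i) is non-constant (so $h\circ g$ is dominant and the comorphism argument applies) because $f^*$ is injective and $(h^*)^{-1}(\s)$ is transcendental over $\R$.
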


\subsection{Branches at infinity of a real algebraic curve}

We prove next two announced results in the Introduction (Remarks \ref{infty0}). Although they are surely well-known, we have not found any explicit reference to them in the literature, so we provide explicit proofs of them here. For $\K=\R$ or $\C$ we denote the ring of convergent power series in one variable and coefficients in $\K$ with $\K\{\t\}$ and its field of fractions with $\K(\{\t\})$.

\begin{lem}\label{infty1}
Let $S\subset\R^m$ be a semialgebraic set of dimension $1$. If $\cl_{\C\PP^m}^{\zar}(S)\cap\mathsf{H}^m_\infty(\C)=\{p\}$ and $\cl_{\C\PP^m}^{\zar}(S)_p$ is irreducible, then $\cl_{\C\PP^m}^{\zar}(S)\cap\R^m$ is unbounded.
\end{lem}
\begin{proof}
Consider the complex conjugation $\sigma_m:\C\PP^m\to\C\PP^m$. Both the Zariski closure $\cl_{\C\PP^m}^{\zar}(S)$ and $\mathsf{H}^m_\infty(\C)$ are invariant (under the complex conjugation $\sigma_m$). Observe that $\sigma_m(p)\in\cl_{\C\PP^m}^{\zar}(S)\cap\mathsf{H}^m_\infty(\C)=\{p\}$, so $p=\sigma_m(p)$ and $p\in\mathsf{H}^m_\infty(\R)$. After an invariant projective change of coordinates that keeps invariant the hyperplane at infinity $\mathsf{H}^m_\infty(\C)$, we may assume that $p=[0:1:0:\cdots:0]$. Consider the chart $\{\x_1\neq0\}$ of $\C\PP^m$ and identify it with $\C^m$, so $p=(0,\ldots,0)$. As $\cl_{\C\PP^m}^{\zar}(S)_p$ is irreducible, there exist, after a linear change of coordinates in $\C^m$, by R\"uckert's parameterization \cite[Prop.3.4]{rz}: 
\begin{itemize}
\item irreducible monic polynomials $P_j\in\C\{\x_0\}[\x_j]$ of degree $d_j\geq1$ such that $P_j(0,\x_j)=\x_j^{d_j}$ for $j=2,\ldots,m$, 
\item polynomials $Q_j\in\C\{\x_0\}[\x_2]$ of degree $<d_2$ for $j=3,\ldots,m$,
\item an open (small enough) neighborhood $U\subset\C^m$ of the origin,
\end{itemize}
such that
\begin{multline*}
\cl_{\C\PP^m}^{\zar}(S)\cap U=\{(x_0,x_2,\ldots,x_m)\in U:\ P_2(x_0,x_2)=0, x_j=\tfrac{Q_j(x_0,x_2)}{\Delta_2(x_0)}\text{ for $j=3,\ldots,m$}\}\\
\subset\{(x_0,x_2,\ldots,x_m)\in U:\ P_j(x_0,x_j)=0\text{ for $j=2,\ldots,m$}\}
\end{multline*}
where $\Delta_2\in\C\{\x_0\}\setminus\{0\}$ is the discriminant of $P_2$. By Newton-Puiseux theorem \cite[Prop.4.5]{rz} there exist $\alpha_2\in\C\{\t\}$ and an integer $\ell\geq1$ such that $\alpha_2(0)=0$ and $P_2(\t^\ell,\alpha_2)=0$. Define $\alpha_j:=\tfrac{Q_k(\t^\ell,\t)}{\Delta_2(\t^\ell)}\in\C(\{\t\})$ for $j=3,\ldots,m$. As $P_j(\t^\ell,\alpha_j(\t))=0$ and $P_j\in\C\{\x_0\}[\x_j]$ is a monic polynomial such that $P_j(0,\x_j)=\x_j^{d_j}$, we have $\alpha_j\in\C\{\t\}$ (because $\C\{\t\}$ is integrally closed in $\C(\{\t\})$, as it is a UFD) and $\alpha_j(0)=0$ for $j=2,\ldots,m$. We may assume that each $\alpha_k$ is defined on a disc $D\subset\C$ centered at the origin. The fibers of $\alpha:=[\t^\ell:1:\alpha_2:\cdots:\alpha_m]:D\to\cl_{\C\PP^m}^{\zar}(S)$ are (complex) analytic subsets of $D$. As $\alpha$ is non-constant, we deduce $\alpha^{-1}(\alpha(z))$ have dimension $0$ for each $z\in D$. We conclude by \cite[Ch.VII.Prop.3, pag.131]{n} that $\cl_{\C\PP^m}^{\zar}(S)_p=\im(\alpha)_p$, because $\cl_{\C\PP^m}^{\zar}(S)_p$ is an irreducible analytic set germ of dimension $1$. 

As $S\subset\R^n$ is a semialgebraic set, $\cl_{\C\PP^m}^{\zar}(S)$ is invariant and there exist finitely many homogeneous polynomials $F_k\in\R[\x_0,\x_1,\ldots,\x_m]$ such that $\cl_{\C\PP^m}^{\zar}(S)=\{F_1=0,\ldots,F_s=0\}$. Write $\alpha_j:=\sum_{q\geq0}a_{jq}\t^q$ where each $a_{jq}\in\C$ and define $\beta_j:=\sum_{q\geq0}\ol{a_{jq}}\t^q$. As 
\begin{multline*}
0=\ol{F_k(\t^\ell,1,\alpha_2(\t),\ldots,\alpha_m(\t))}=F_k\Big(\ol{\t}^\ell,1,\sum_{q\geq0}\ol{a_{2q}}\ol{\t}^q,\ldots,\sum_{q\geq0}\ol{a_{mq}}\ol{\t}^q\Big)\\
=F_k(\ol{\t}^\ell,1,\beta_2(\ol{\t}),\ldots,\beta_m(\ol{\t})),
\end{multline*}
we deduce $\beta:=[\t^\ell:1:\beta_2:\cdots:\beta_m]:D\to\cl_{\C\PP^m}^{\zar}(S)$ and $\beta(0)=p\in\mathsf{H}^m_\infty(\R)$. As $\im(\alpha)_p$ is an irreducible analytic set germ of dimension $1$, also $\im(\beta)_p$ is an irreducible analytic set germ of dimension $1$. As $\cl_{\C\PP^m}^{\zar}(S)_p$ is irreducible, $\im(\alpha)_p=\im(\beta)_p$. 

Consequently, for each $t\in D\setminus\{0\}$, there exists $s\in D\setminus\{0\}$ such that $t^\ell=s^\ell$ and $\alpha(t)=\beta(s)$. In particular, there exists an $\ell^{\rm th}$ root of unity $\zeta(t,s)$ such that $s=\zeta(t,s)t$ and $\alpha(t)=\beta(\zeta(t,s)t)$. As there are only $\ell$ possible values of $\zeta(t,s)$, we deduce taking a sequence in $D$ converging to the origin and the Identity Principle that there exists an $\ell^{\rm th}$ root of unity $\zeta$ that does neither depend on $t$ nor on $s$ such that $\alpha(t)=\beta(\zeta t)$ for $t\in D$, so $\alpha(\t)=\beta(\zeta\t)$. Thus, $a_{jq}=\ol{a_{jq}}\zeta^q$ for each $j=2,\ldots,m$ and each $q\geq0$. Write each non-zero $a_{jq}=\rho_{jq}\theta_{jq}$ where $\rho_{jq}\in\R$ is a positive real number and $\theta_{jq}\in\C$ is a complex number of module $1$. Consequently, $\rho_{jq}\theta_{jq}=\rho_{jq}\ol{\theta_{jq}}\zeta^q$. As $\rho_{jq}\neq0$, we have $\theta_{jq}=\ol{\theta_{jq}}\zeta^q$ and (multiplying the previous equality by $\theta_{jq}$) we deduce $\theta_{jq}^2=\zeta^q$. Let $\xi\in\C$ be such that $\xi^2=\zeta$, so $\theta_{jq}^2=\xi^{2q}=(\xi^q)^2$ for $j=2,\ldots,m$ and each $q\geq0$ such that $a_{jq}\neq0$. Thus, there exist $\veps_{jq}\in\{-1,+1\}$ such that $\theta_{jq}=\veps_{jq}\xi^q$ for each $j=2,\ldots,m$ and each $q\geq0$ such that $a_{jq}\neq0$. If $a_{jq}=0$, we define $\rho_{jq}:=0$ and $\veps_{jq}:=1$, so $a_{jq}=\rho_{jq}\veps_{jq}\xi^q$. We deduce 
$$
\alpha_j\Big(\frac{\t}{\xi}\Big)=\sum_{q\geq0}a_{jq}\frac{1}{\xi^q}\t^q=\sum_{q\geq0}\rho_{jq}\veps_{jq}\xi^q\frac{1}{\xi^q}\t^q=\sum_{q\geq0}\rho_{jq}\veps_{jq}\t^q\in\R\{\t\}
$$ 
for each $j=2,\ldots,m$ and $(\frac{\t}{\xi})^\ell=\veps\t^\ell$ for some $\veps\in\{-1,1\}$, because $(\xi^{\ell})^2=\zeta^\ell=1$. Consequently, there exists $\delta>0$ such that $\gamma:=\alpha(\frac{\t}{\xi}):[-\delta,\delta]\to\cl_{\C\PP^m}^{\zar}(S)\cap\R\PP^m$ and $\gamma(0)=p\in\mathsf{H}^m_\infty(\R)$. As $\cl_{\C\PP^m}^{\zar}(S)\cap\mathsf{H}^m_\infty(\R)=\{p\}$, we conclude $\im(\gamma)\setminus\{p\}\subset\cl_{\C\PP^m}^{\zar}(S)\cap\R^m$, so $\cl_{\C\PP^m}^{\zar}(S)\cap\R^m$ is unbounded, as required.
\end{proof}

\begin{lem}\label{infty2}
Let $S\subset\R^m$ be a semialgebraic set of dimension $1$. If $\cl_{\C\PP^m}^{\zar}(S)\cap\mathsf{H}^m_\infty(\C)=\{p\}$ and the analytic set germ $\cl_{\C\PP^m}^{\zar}(S)_p$ has exactly two irreducible components that are conjugated, then $\cl_{\C\PP^m}^{\zar}(S)\cap\R^m$ is bounded.
\end{lem}
\begin{proof}
As $S\subset\R^n$ is a semialgebraic set, $\cl_{\C\PP^m}^{\zar}(S)$ is invariant and there exist finitely many homogeneous polynomials $F_1,\ldots,F_s\in\R[\x_0,\x_1,\ldots,\x_n]$ such that $\cl_{\C\PP^m}^{\zar}(S)=\{F_1=0,\ldots,F_s=0\}$. As $\cl_{\C\PP^m}^{\zar}(S)\cap\mathsf{H}^m_\infty(\C)=\{p\}$, we deduce $\ol{F_k(p)}=F_k(\ol{p})=0$ for $k=1,\ldots,s$. Thus, $\ol{p}\in\cl_{\C\PP^m}^{\zar}(S)\cap\mathsf{H}^m_\infty(\C)=\{p\}$, so $p=\ol{p}\in\mathsf{H}^m_\infty(\R)$. If $T:=\cl_{\C\PP^m}^{\zar}(S)\cap\R^m$ is unbounded, $p\in\cl_{\R\PP^m}(T)$. We embed $\R\PP^m$ as a real algebraic subset of $\R^{(m+1)^2}$, see \cite[\S3.4.2]{bcr}. By the Nash curve selection lemma \cite[Prop.8.1.13]{bcr} there exists a Nash map $\alpha:[-1,1]\to\R\PP^m$ such that $\alpha(0)=p$ and $\alpha((0,1))\subset T$. As $\cl_{\R\PP^m}^{\zar}(S)$ is a real algebraic set and $\alpha((0,1))\subset\cl_{\R\PP^m}^{\zar}(S)$, we deduce by the Identity Principle that $\alpha([-1,1])\subset\cl_{\R\PP^m}^{\zar}(S)$. 

Let $D\subset\C$ be a disc centered in the origin such that there exists a holomorphic extension $\beta:D\to\cl_{\C\PP^m}^{\zar}(S)$ of $\alpha|_{(-\veps,\veps)}$ for some $0<\veps<1$. As the components of $\alpha$ are real analytic functions, the coefficients of the Taylor expansions at the origin of the components of $\beta$ are real numbers. The fibers of $\beta$ are (complex) analytic subsets of $D$. As $\beta$ is non-constant, we deduce $\beta^{-1}(\beta(z))$ have dimension $0$ for each $z\in D$. By \cite[Ch.VII. Prop.3, pag. 131]{n} the set germ $\im(\beta)_p\subset\cl_{\C\PP^m}^{\zar}(S)_p$ is a (complex) analytic set germ of dimension $1$. In fact, it is irreducible, because $D$ is an irreducible (complex) analytic set. As $\cl_{\C\PP^m}^{\zar}(S)_p$ has dimension $1$, $\im(\beta)_p$ is one of the irreducible components of the (complex) analytic set germ $\cl_{\C\PP^m}^{\zar}(S)_p$. As the coefficients of the Taylor expansions at the origin of the components of $\beta$ are real numbers, $\im(\beta)_p$ is invariant under conjugation, which is a contradiction, because $\cl_{\C\PP^m}^{\zar}(S)_p$ has exactly two irreducible components that are conjugated. Consequently, $\cl_{\C\PP^m}^{\zar}(S)\cap\R^m$ is bounded, as required.
\end{proof}

\section{Proofs of the main results}\label{s3}

The main purpose of this section is to prove Theorems \ref{rb1s1}, \ref{pb1} and \ref{ps1}. We also prove Corollary \ref{ps1c} and Proposition \ref{ps2} at the end of the section.

\subsection{Proof of Theorem \ref{rb1s1}}
The chain of implications (i) $\Longrightarrow$ (v) $\Longrightarrow$ (vii) $\Longrightarrow$ (iii) $\Longrightarrow$ (i) $\Longrightarrow$ (ii) $\Longrightarrow$ (vi) $\Longrightarrow$ (viii) $\Longrightarrow$ (iv) follows from the existence of regular surjective maps between $\ol{\Bb}_n$ and $\sph^n\subset\R^{n+1}$ for each $n\geq1$ and viceversa and the fact that a regulous map on a non-singular algebraic curve is a regular map.

We prove next (iv) $\Longrightarrow$ (ix). By \cite[(3.1)(iv)]{fg3} and \cite[Thm.3.11]{fhmm} we deduce $S$ is irreducible and as $f$ is continuous and $\ol{\Bb}_n$ is compact, also $S$ is compact. Now let $f:\R^n\dasharrow\R^m$ be a rational map such that $f$ extends continuously to $\ol{\Bb}_n$ and $f(\ol{\Bb}_n)=S$. By Lemma \ref{fact2} there exist a rational function $g\in\R(\x)$ and a rational map $h:=(\frac{h_1}{h_0},\ldots,\frac{h_m}{h_0}):\R\to\R^m$ such that $f=h\circ g$. By Lemma \ref{fact1} we deduce $\cl_{\R\PP^m}^{\zar}(S)$ is a rational curve.

Let us prove (ix) $\Longrightarrow$ (i). Let $\Pi:\C\PP^1\to\cl_{\C\PP^m}^{\zar}(S)$ be the normalization of $\cl_{\C\PP^m}^{\zar}(S)$. By \S\ref{nac}($\bullet$) we have $\Pi(\R\PP^1)=\cl_{\R\PP^m}^{\zar}(S)_{(1)}$. If $S=\cl_{\R\PP^m}^{\zar}(S)_{(1)}$, then $S$ is by Example \ref{s1rp1} a regular image of $\sph^1$ and consequently a regular image of $\ol{\Bb}_1$. On the other hand, if $S\neq\cl_{\R\PP^m}^{\zar}(S)_{(1)}$, we may assume (after a projective change of coordinates in $\R\PP^1$) that the image of the point at infinite ${[0:1]}$ of $\R\PP^1$ under $\Pi$ belongs to $\cl_{\R\PP^m}^{\zar}(S)_{(1)}\setminus S$. By \cite[Cor.3.5]{fg3} there exists an interval $I\subset\R=\R\PP^1\setminus\{{[0:1]}\}$ that is the one dimensional part of $\Pi^{-1}(S)\cap\R\PP^1$. As $S$ is compact and $\Pi|_{\R\PP^1}$ is proper, the interval $I$ is compact, so we may assume $I=[-1,1]=\ol{\Bb}_1$. Thus, $S$ is a regular image of $\ol{\Bb}_1$, as required.
\qed

\subsection{Proof of Theorem \ref{pb1}}
We first prove the equivalence (i) $\iff$ (ii). The implication left to right is clear. Let us prove the converse. Let $f:=(f_1,\ldots,f_m):\R^n\to\R^m$ be a polynomial map such that $f(\ol{\Bb}_n)=S$. As $S$ is one dimensional, its Zariski closure $Z$ in $\R^m$ is also one dimensional. As the interior in $\R^n$ of $\ol{\Bb}_n$ is non-empty, we deduce by the Identity Principle $f(\R^n)\subset Z$. By Lemma \ref{fact2} there exist a polynomial function $g\in\R[\x]$ and a polynomial map $h:\R\to\R^m$ such that $f=h\circ g$. Consequently, $f(\ol{\Bb}_n)=h(g(\ol{\Bb}_n))$. As $\ol{\Bb}_n$ is compact and connected, $g(\ol{\Bb}_n)$ is a compact (non-trivial) interval $I$ of $\R$ and, after a change of coordinates in $\R$, we may assume $I=[-1,1]$. Thus, $S$ is a polynomial image of $\ol{\Bb}_1$.

Let us prove (i) $\Longrightarrow$ (iii) Let $f:\R\to\R^m$ be a polynomial map such that $f([-1,1])=S$ and define $T:=f(\R)$. As $S$ is one dimensional, its Zariski closure $Z$ in $\R^n$ is also one dimensional. As the interior in $\R$ of $[-1,1]$ is non-empty, we deduce by the Identity Principle $T=f(\R)\subset Z$, so $T$ is also one dimensional. By Theorem \ref{dim1p} we have that $T$ is irreducible, unbounded and $\cl_{\C\PP^m}^{\zar}(S)=\cl_{\C\PP^m}^{\zar}(Z)=\cl_{\C\PP^m}^{\zar}(T)$ is an invariant rational curve such that the set of points at infinity $\cl_{\C\PP^m}^{\zar}(S)\cap\mathsf{H}^m_\infty(\C)$ is a singleton $\{p\}$ and the analytic set germ $\cl_{\C\PP^m}^{\zar}(S)_p$ is irreducible. In addition, $S$ is by \cite[(3.1)(iv)]{fg3} irreducible and compact, because $[-1,1]$ is irreducible and compact.

We prove next (iii) $\Longrightarrow$ (i) As the set of points at infinity $\cl_{\C\PP^m}^{\zar}(S)\cap\mathsf{H}^m_\infty(\C)$ is a singleton $\{p\}$ and the analytic set germ $\cl_{\C\PP^m}^{\zar}(S)_p$ is irreducible, the intersection $\cl_{\C\PP^m}^{\zar}(S)\cap\R^m$ is by Lemma \ref{infty1} unbounded. Thus, the one dimensional component $T$ of $\cl_{\C\PP^m}^{\zar}(S)\cap\R^m$ is unbounded. In addition, $T$ is irreducible and $\cl_{\C\PP^m}^{\zar}(T)=\cl_{\C\PP^m}^{\zar}(S)$ is an invariant rational curve such that the set of points at infinity $\cl_{\C\PP^m}^{\zar}(T)\cap\mathsf{H}^m_\infty(\C)$ is a singleton $\{p\}$ and the analytic set germ $\cl_{\C\PP^m}^{\zar}(T)_p$ is irreducible. Let $\Pi:=[\Pi_0:\cdots:\Pi_m]:\C\PP^1\to\cl_{\C\PP^m}^{\zar}(T)$ be an invariant normalization of the rational curve $\cl_{\C\PP^m}^{\zar}(T)$ such that $\Pi({[0:1]})=p\in\mathsf{H}^m_\infty(\C)$. We claim: {\em $\Pi|_\R$ is a polynomial map}.

As $\Pi$ is invariant, we may assume that the components $\Pi_k$ of $\Pi$ are real homogeneous polynomials of certain common degree $d$. Thus, $\Pi|_{\R\PP^1}:\R\PP^1\to\cl_{\R\PP^m}^{\zar}(T)$ is a real polynomial map. As $\cl_{\C\PP^m}^{\zar}(T)\cap\mathsf{H}^m_\infty(\C)=\{p\}$ and the analytic set germ $\cl_{\C\PP^m}^{\zar}(T)_p$ is irreducible, 
$$
\{\Pi_0=0\}=\Pi^{-1}(\cl_{\C\PP^m}^{\zar}(S)\cap\mathsf{H}^m_\infty(\C))=\Pi^{-1}(\{p\})
$$ 
is a singleton (see \S\ref{nac}). As $\Pi({[0:1]})=p$, we deduce $\{\Pi_0=0\}=\{{[0:1]}\}$. Thus, $\Pi_0=\lambda\t_0^d$ for some $\lambda\in\R\setminus\{0\}$ and, in fact, we may assume $\lambda=1$. Consequently, $\Pi|_\R$ is a polynomial map (recall that $\R\equiv\{\t_0=1\}$), as claimed.

As $S$ is irreducible, there exists by \cite[Thm.3.15]{fg3} a $1$-dimensional connected component $I$ of $\Pi^{-1}(S)\cap\R\PP^1$ such that $\Pi(I)=S$. As $\Pi({[0:1]})=p$, we deduce $I\subset\R$. As $\Pi$ is proper and $S$ is compact, $I\subset\R$ is compact. We conclude $I$ is a non-trivial compact interval. Thus, after a change of coordinates in $\R$, we may assume $I:=[-1,1]=\ol{\Bb}_1$, so $S=\Pi|_{\R}(\ol{\Bb}_1)$ is a polynomial image of $\ol{\Bb}_1$, as required.
\qed

\subsection{Polynomial maps on the circle and complex Laurent polynomials}\label{pclp}
Before proving Theorem \ref{ps1} we recall that the restriction to $\sph^1\subset\R^2$ of a polynomial map $g:=(g_1,g_2):\R^2\to\R^2$ coincides with the restriction to $\sph^1\subset\C\equiv\R^2$ of a Laurent polynomial $\Gamma\in\C[\z,\z^{-1}]$. Namely, if $g_1:=\sum_{k,\ell}a_{k\ell}\x^k\y^\ell$ and $g_2:=\sum_{k,\ell}b_{k\ell}\x^k\y^\ell$, then 
$$
\Gamma:=\sum_{k,\ell}(a_{k\ell}+{\tt i}b_{k\ell})\Big(\frac{\z+\ol{\z}}{2}\Big)^k\Big(\frac{\z-\ol{\z}}{2{\tt i}}\Big)^\ell=\sum_{k,\ell}(a_{k\ell}+{\tt i}b_{k\ell})\Big(\frac{\z}{2}+\frac{1}{2\z}\Big)^k\Big(\frac{\z}{2{\tt i}}-\frac{1}{2{\tt i}\z}\Big)^\ell\in\C[\z,\z^{-1}],
$$
where $\z=\x+{\tt i}\y$ and $\z\ol{\z}=1$. Conversely, if $\Gamma=\sum_{k=-m}^n\alpha_k\z^k\in\C[\z,\z^{-1}]$ is a Laurent polynomial for some integers $m,n\geq0$ and $\alpha_k:=a_k+{\tt i}b_k$ where $a_k,b_k\in\R$ for each $k$, the restriction $\Gamma|_{\sph^1}:\sph^1\to\C$ equals
$$
\sum_{k=-m}^n\alpha_k\z^k=\sum_{k=0}^n\alpha_k\z^k+\sum_{k=0}^m\alpha_{-k}\ol{\z}^k=\sum_{k=0}^n(a_k+{\tt i}b_k)(\x+{\tt i}\y)^k+\sum_{k=0}^m(a_{-k}+{\tt i}b_{-k})(\x-{\tt i}\y)^k.
$$
Considering the real and imaginary parts of the previous expression, we find $g_1,g_2\in\R[\x,\y]$ such that the polynomial map $g:=(g_1,g_2):\R^2\to\R^2$ satisfies $g|_{\sph^1}=\Gamma|_{\sph^1}$ after identifying $\C\equiv\R^2$.

\subsection{Proof of Theorem \ref{ps1}}
(i) $\Longrightarrow$ (ii) Suppose first $S$ is a polynomial image of $\sph^1$. As $\sph^1$ is compact, {\em $S$ is a compact semialgebraic set} and by \cite[(3.1)(iv)]{fg3} $S$ is {\em irreducible}.

Let $g:=(g_1,\ldots,g_m):\R^2\to\R^m$ be a polynomial map, where $g_i\in\R[\x_1,\x_2]$, such that $g(\sph^1)=S$. Let $d:=\max\{\deg(g_i):\ i=1,\ldots,m\}$ and define $G_i:=g_i(\frac{\x_1}{\x_0},\frac{\x_2}{\x_0})\x_0^d$ for $i=1,\ldots,m$ and $G_0:=\x_0^d$. Consider the polynomial map $G:=[G_0:G_1:\cdots:G_m]:\C\PP^2\dasharrow\C\PP^m$ and its restriction to $X:=\{\x_1^2+\x_2^2-\x_0^2=0\}\subset\C\PP^2$, which is the Zariski closure of $\sph^1$ in $\C\PP^2$. By Lemma \ref{curve} $G|_X$ extends to $X$ as a (unique) invariant regular map that we denote with $G$. As $G$ is continuous for the Zariski topology, we deduce $G(X)\subset\cl_{\C\PP^m}^{\zar}(S)$. By \cite[Prop.(2.31)]{m1} it contains a non-empty Zariski open subset of $\cl_{\C\PP^m}^{\zar}(S)$. As $G$ is proper and $\cl_{\C\PP^m}^{\zar}(S)$ is irreducible, we conclude by \cite[Thm.(2.33)]{m1} $G(X)=\cl_{\C\PP^m}^{\zar}(S)$.

Consider the parameterization $\Phi:\C\PP^1\to X,\ [\t_0:\t_1]\to[\t_0^2+\t_1^2:2\t_0\t_1:\t_1^2-\t_0^2]$, which is the regular extension to $\C\PP^1$ of the stereographic projection
$$
\varphi:\R\to\sph^1,\ t\mapsto\Big(\frac{2t}{1+t^2},\frac{t^2-1}{1+t^2}\Big)
$$
whose image is $\sph^1\setminus\{(0,1)\}$. Consider the composition $F:=[F_0:\cdots:F_m]=G\circ\Phi:\C\PP^1\to\cl_{\C\PP^m}^{\zar}(S)$, which is surjective, and observe that $F_0=(\t_0^2+\t_1^2)^d$. The $\gcd(F_0,\ldots,F_m)$ is an invariant divisor of $(\t_0^2+\t_1^2)^d$. After dividing each $F_i$ by such greatest common divisor, we assume $F_0=(\t_0^2+\t_1^2)^p$ for some integer $p\geq1$ and $\gcd(F_0,\ldots,F_m)=1$.

Let $(\widetilde{Y}\subset\C\PP^k,\Pi)$ be an invariant normalization of $Y:=\cl_{\C\PP^m}^{\zar}(S)$. The composition $\Pi^{-1}\circ F:\C\PP^1\dashrightarrow \widetilde{Y}$ defines an invariant rational map that can be extended to an invariant surjective regular map $\widetilde{F}:\C\PP^1\to\widetilde{Y}$ such that $F=\Pi\circ\widetilde{F}$. Observe that $\widetilde{Y}$ is by \cite[Cor.(7.6), Cor.(7.20)]{m1} a non-singular curve of arithmetic genus $0$, that is, a non-singular rational curve \cite[Cor.(7.17)]{m1}. Consequently, we may take $\widetilde{Y}=\C\PP^1$ and {\em $\cl_{\C\PP^m}^{\zar}(S)$ is an invariant rational curve}. Thus, $\Pi(\R\PP^1)=\cl_{\R\PP^m}^{\zar}(S)_{(1)}$ (see \S\ref{nac}($\bullet$)).

Write $\Pi:=(\Pi_0,\ldots,\Pi_m)$ and $\widetilde{F}:=(\widetilde{F}_0,\widetilde{F}_1)$ where $\Pi_i,\widetilde{F}_j\in\R[\x_0,\x_1]$ are homogeneous polynomials and $\widetilde{F}_0,\widetilde{F}_1$ are relatively prime. We claim: {\em One of the following situations hold:
\begin{itemize}
\item[(1)] $\widetilde{F}$ is an invariant projective change of coordinates in $\C\PP^1$, so we assume $F=\Pi$ and $\Pi_0=F_0=(\x_0^2+\x_1^2)^p$.
\item[(2)] After an invariant projective change of coordinates in $\C\PP^1$, we have $\Pi_0=\x_0^e$ for some $e\geq1$ and $\widetilde{F}_0=(\x_0^2+\x_1^2)^\ell$ where $\ell\geq1$ and $p=\ell e$.
\item[(3)] After an invariant projective change of coordinates in $\C\PP^1$, we have $\Pi_0=(\x_0^2+\x_1^2)^{e_0}$, $\widetilde{F}_1-{\tt i}\widetilde{F}_0=\lambda_1(\x_0+{\tt i}\x_1)^k$, $\widetilde{F}_1+{\tt i}\widetilde{F}_0=\ol{\lambda_1}(\x_0-{\tt i}\x_1)^k$ for some positive integers $e_0,k$ such that $p=ke_0$ and some $\lambda_1\in\C\setminus\{0\}$.
\end{itemize}
}

Observe first that $\widetilde{F}$ is not constant, because it is surjective. Factorize
$$
\Pi_0=u\prod_{i=1}^e(a_i\x_1-b_i\x_0)\in\C[\x_0,\x_1]
$$
where $u\in\R\setminus\{0\}$, $a_i\in\{0,1\}$, $b_i\in\C$, $b_i=1$ if $a_i=0$ and $(a_i,b_i)\neq(0,0)$ for $i=1,\ldots,m$. Denote ${\tt p}_i:=\widetilde{F}_i(1,\x_1)\in\R[\x_1]$ and observe
\begin{equation}\label{prod}
\prod_{i=1}^e(a_i{\tt p}_1-b_i{\tt p}_0)=\Pi_0({\tt p}_0,{\tt p}_1)=F_0(1,\x_1)=(1+\x_1^2)^p=(1+{\tt i}\x_1)^p(1-{\tt i}\x_1)^p.
\end{equation}
We proceed in several steps:

\noindent{\sc Step 1.} Suppose first $a_1=1$ and $b_1\in\C\setminus\R$. As all the involved rational maps are invariant, we may assume $a_2=1$ and $b_2=\ol{b_1}$. As ${\tt p}_0,{\tt p}_1$ are relatively prime (and at least one of them is non constant), we deduce: 
{\em
$$
\begin{cases}
{\tt p}_1-b_1{\tt p}_0=\lambda_1(1+{\tt i}\x_1)^{k_1},\\
{\tt p}_1-\ol{b_1}{\tt p}_0=\ol{\lambda_1}(1-{\tt i}\x_1)^{k_1}
\end{cases}
$$
for some $k_1\geq1$ and $\lambda_1\in\C\setminus\{0\}$.} 

Otherwise, either $k_1=0$, which is a contradiction, because at least one between ${\tt p}_0,{\tt p}_1$ is not constant, or $(1+\x_1^2)$ divides both ${\tt p}_1-b_1{\tt p}_0$ and ${\tt p}_1-\ol{b_1}{\tt p}_0$, so $(1+\x_1^2)$ divides both ${\tt p}_0$ and ${\tt p}_1$, which is a contradiction, because ${\tt p}_0,{\tt p}_1$ are relatively prime. We have the system:
$$
\begin{cases}
{\tt p}_1-b_1{\tt p}_0=\lambda_1(1+{\tt i}\x_1)^{k_1},\\
{\tt p}_1-\ol{b_1}{\tt p}_0=\ol{\lambda_1}(1-{\tt i}\x_1)^{k_1}.
\end{cases}
$$
Consequently,
\begin{equation}\label{p0p1}
{\tt p}_0=\frac{\lambda_1(1+{\tt i}\x_1)^{k_1}-\ol{\lambda_1}(1-{\tt i}\x_1)^{k_1}}{\ol{b_1}-b_1}\quad\text{and}\quad{\tt p}_1=\frac{\ol{b_1}\lambda_1(1+{\tt i}\x_1)^{k_1}-b_1\ol{\lambda_1}(1-{\tt i}\x_1)^{k_1}}{\ol{b_1}-b_1}.
\end{equation}

\noindent{\sc Step 2.} Suppose there exists a root $[a_3:b_3]\in\C\PP^1$ of $\Pi_0$ different from $[1:b_1]$ and $[1:\ol{b_1}]$. Then $a_3{\tt p}_1-b_3{\tt p}_0=\lambda_3(1+{\tt i}\x_1)^{k_3}(1-{\tt i}\x_1)^{k_3'}$ for some $k_3,k_3'\geq0$ and $\lambda_3\in\C\setminus\{0\}$. Suppose first $k_3>0$ and consider the system:
$$
\begin{cases}
{\tt p}_1-b_1{\tt p}_0=\lambda_1(1+{\tt i}\x_1)^{k_1},\\
a_3{\tt p}_1-b_3{\tt p}_0=\lambda_3(1+{\tt i}\x_1)^{k_3}(1-{\tt i}\x_1)^{k_3'}.
\end{cases}
$$
Then ${\tt p}_0,{\tt p}_1$ share the irreducible factor $1+{\tt i}\x_1$, which is a contradiction. Consequently, $k_3=0$. If $k_3'>0$, we consider the system:
$$
\begin{cases}
{\tt p}_1-\ol{b_1}{\tt p}_0=\ol{\lambda_1}(1-{\tt i}\x_1)^{k_1},\\
a_3{\tt p}_1-b_3{\tt p}_0=\lambda_3(1-{\tt i}\x_1)^{k_3'}
\end{cases}
$$
and we conclude that ${\tt p}_0,{\tt p}_1$ share the irreducible factor $1-{\tt i}\x_1$, which is a contradiction. Consequently, $k_3'=0$ and $a_3{\tt p}_1-b_3{\tt p}_0=\lambda_3$. By \eqref{p0p1}
$$
\lambda_3(\ol{b_1}-b_1)=(a_3{\tt p}_1-b_3{\tt p}_0)(\ol{b_1}-b_1)=(a_3\ol{b_1}\lambda_1-b_3\lambda_1)(1+{\tt i}\x_1)^{k_1}-(a_3b_1\ol{\lambda_1}-b_3\ol{\lambda_1})(1-{\tt i}\x_1)^{k_1}.
$$
We substitute $\x_1=0$, $\x_1={\tt i}$ and $\x_1=-{\tt i}$ and obtain:
$$
\begin{cases}
\lambda_3(\ol{b_1}-b_1)=(a_3\ol{b_1}\lambda_1-b_3\lambda_1)-(a_3b_1\ol{\lambda_1}-b_3\ol{\lambda_1}),\\
\lambda_3(\ol{b_1}-b_1)=-(a_3b_1\ol{\lambda_1}-b_3\ol{\lambda_1})2^{k_1},\\
\lambda_3(\ol{b_1}-b_1)=(a_3\ol{b_1}\lambda_1-b_3\lambda_1)2^{k_1}.
\end{cases}
$$
Thus, using second and third equations, we have $(a_3\ol{b_1}\lambda_1-b_3\lambda_1)=-(a_3b_1\ol{\lambda_1}-b_3\ol{\lambda_1})$. Using now the first equation, we deduce $\lambda_3(\ol{b_1}-b_1)=2(a_3\ol{b_1}\lambda_1-b_3\lambda_1)$. As $\lambda_3(\ol{b_1}-b_1)\neq0$, we have $a_3\ol{b_1}\lambda_1-b_3\lambda_1\neq0$. Thus, 
\begin{multline*}
2(a_3\ol{b_1}\lambda_1-b_3\lambda_1)=(a_3\ol{b_1}\lambda_1-b_3\lambda_1)((1+{\tt i}\x_1)^{k_1}+(1-{\tt i}\x_1)^{k_1})\\ 
\leadsto\ 2=(1+{\tt i}\x_1)^{k_1}+(1-{\tt i}\x_1)^{k_1}.
\end{multline*}
If we substitute $\x_1=2{\tt i}$, we deduce $2=(-1)^{k_1}+3^{k_1}$, which only holds if $k_1=1$. This means by \eqref{p0p1}
\begin{equation}\label{p0p12}
{\tt p}_0=\frac{\lambda_1(1+{\tt i}\x_1)-\ol{\lambda_1}(1-{\tt i}\x_1)}{\ol{b_1}-b_1}\quad\text{and}\quad{\tt p}_1=\frac{\ol{b_1}\lambda_1(1+{\tt i}\x_1)-b_1\ol{\lambda_1}(1-{\tt i}\x_1)}{\ol{b_1}-b_1},
\end{equation}
so $[\widetilde{F}_0:\widetilde{F}_1]:\C\PP^1\to\C\PP^1$ is an invariant projective change of coordinates. We may assume, after changing $\Pi$ by $F$, that $\Pi=F$, which corresponds to situation (1) above. 

\noindent{\sc Step 3}. Assume next that no root $[a_3:b_3]\in\C\PP^1$ of $\Pi_0$ is different from either $[1:b_1]$ or $[1:\ol{b_1}]$. Thus, the degree $e$ of $\Pi_0$ is even, say $e=2e_0$, and
$$
\Pi_0=u(\x_1-b_1\x_0)^{e_0}(\x_1-\ol{b_1}\x_0)^{e_0}.
$$
After an invariant projective change of coordinates in $\C\PP^1$ that maps $[1:b_1]$ to $[1:{\tt i}]$ and $[1:\ol{b_1}]$ to $[1:-{\tt i}]$, we may assume $\Pi_0=u(\x_0^2+\x_1^2)^{e_0}$, 
$\Pi_0({\tt p}_0,{\tt p}_1)=u({\tt p}_0^2+{\tt p}_1^2)^{e_0}=(1+\x_1^2)^p$ and
$$
\begin{cases}
{\tt p}_1-{\tt i}{\tt p}_0=\lambda_1(1+{\tt i}\x_1)^{k_1},\\
{\tt p}_1+{\tt i}{\tt p}_0=\ol{\lambda_1}(1-{\tt i}\x_1)^{k_1}.
\end{cases}
$$
Consequently,
\begin{equation}\label{cp1cp1}
\begin{cases}
\widetilde{F}_1-{\tt i}\widetilde{F}_0=\lambda_1(\x_0+{\tt i}\x_1)^{k_1},\\
\widetilde{F}_1+{\tt i}\widetilde{F}_0=\ol{\lambda_1}(\x_0-{\tt i}\x_1)^{k_1}.
\end{cases}
\end{equation}
As $\Pi_0(\widetilde{F}_0,\widetilde{F}_1)=(\x_0^2+\x_1^2)^p$, we have $k_1e_0=p$. This means that we are in situation (3) above. In Remark \ref{degree} we will use \eqref{cp1cp1} to better understand the regular map $\widetilde{F}$. 

\noindent{\sc Step 4}. Assume next the roots of $\Pi_0$ belong to $\R\PP^1$. Suppose after interchanging the variables $\x_0,\x_1$ if necessary that $a_1=1$ and $[1:b_1]\neq[a_2:b_2]$. As ${\tt p}_i\in\R[\x_1]$ for $i=0,1$, we have by \eqref{prod} the system:
$$
\begin{cases}
{\tt p}_1-b_1{\tt p}_0=\lambda_1(1+\x_1^2)^{k_1},\\
a_2{\tt p}_1-b_2{\tt p}_0=\lambda_2(1+\x_1^2)^{k_2},
\end{cases}
$$
for some $\lambda_1,\lambda_2\in\R\setminus\{0\}$ and $k_1,k_2\geq0$. As ${\tt p}_0,{\tt p}_1$ are relatively prime, we deduce that either $k_1=0$ or $k_2=0$. Suppose there exists $[a_3:b_3]\in\R\PP^1$ different from $[1:b_1]$ and $[a_2:b_2]$. Then there exists $\lambda_3\in\R\setminus\{0\}$ such that 
$$
a_3{\tt p}_1-b_3{\tt p}_0=\lambda_3(1+\x_1^2)^{k_3}
$$
for some $k_3\geq0$. As ${\tt p}_0$ and ${\tt p}_1$ are relatively prime, we deduce that in the triple $k_1,k_2,k_3$ there are two integers, which are zero. This implies solving the corresponding linear system that ${\tt p_0}$ and ${\tt p_1}$ are constant, which is a contradiction. Thus, $\Pi_0$ has only two different roots $[1:b_1]$ and $[a_2:b_2]$ of multiplicities $e_0$ and $e_1$ such that $e_0+e_1=e$. Consequently, after an invariant projective change of coordinates in $\C\PP^1$ that maps $[1:b_1]$ to $[1:0]$ and $[a_2:b_2]$ to ${[0:1]}$, we may assume $\Pi_0=\x_0^{e_0}\x_1^{e_1}$ (where $e_0,e_1\geq0$). We have $(\x_0^2+\x_1^2)^p=F_0=\Pi_0(\widetilde{F}_0,\widetilde{F}_1)=\widetilde{F}_0^{e_0}\widetilde{F}_1^{e_1}$, which is a contradiction, because $\widetilde{F}_0,\widetilde{F}_1\in\R[\x_0,\x_1]$ are relatively prime. Thus, $\Pi_0=u(a_1\x_1-b_1\x_0)^e$ and after an invariant projective change of coordinates in $\C\PP^1$ we may assume $\Pi_0=u\x_0^e$. As $=(\x_0^2+\x_1^2)^p=F_0=\widetilde{F}_0^e$, we conclude that $\widetilde{F}_0=(\x_0^2+\x_1^2)^\ell$ for some $\ell\geq1$ such that $p=\ell e$. This corresponds to situation (2) above.

We deduce $\Pi^{-1}(\cl_{\C\PP^m}^{\zar}(S)\cap\mathsf{H}^m_\infty(\C))$ is either equal to ${[0:1]}$ (in situation (2)) or to $\{[1:{\tt i}],[1:-{\tt i}]\}$ (in situations (1) and (3)). We distinguish several cases:

\noindent{\sc Case 1.} $\Pi^{-1}(\cl_{\C\PP^m}^{\zar}(S)\cap\mathsf{H}^m_\infty(\C))=\{{[0:1]}\}$. Then $\cl_{\C\PP^m}^{\zar}(S)\cap\mathsf{H}^m_\infty(\C)=\{p:=\Pi({[0:1]})\}$, so it is a singleton that belongs to $\mathsf{H}^m_\infty(\R)$. As $\cl_{\C\PP^m}^{\zar}(S)_p=\Pi(\C\PP^1_{[0:1]})$, the analytic set germ $\cl_{\C\PP^m}^{\zar}(S)_p$ is irreducible. Consequently, by Lemma \ref{infty1} $\cl_{\R\PP^m}^{\zar}(S)_{(1)}\cap\R^m$ is unbounded.

\noindent{\sc Case 2.} $\Pi^{-1}(\cl_{\C\PP^m}^{\zar}(S)\cap\mathsf{H}^m_\infty(\C))=\{[1:{\tt i}],[1:-{\tt i}]\}$ and $\Pi([1:{\tt i}])=\Pi([1:-{\tt i}])$. Then $\cl_{\C\PP^m}^{\zar}(S)\cap\mathsf{H}^m_\infty(\C)=\{p:=\Pi({[1:{\tt i}]})\}$, so it is a singleton that belongs to $\mathsf{H}^m_\infty(\R)$, because $p=\Pi({[1:{\tt i}]})=\Pi(\sigma_1({[1:{\tt i}]}))=\sigma_m(\Pi({[1:{\tt i}]}))=\sigma_m(p)$. Observe that $\cl_{\C\PP^m}^{\zar}(S)_p=\Pi(\C\PP^1_{[1:{\tt i}]})\cup\Pi(\C\PP^1_{[1:-{\tt i}]})$, so the analytic set germ $\cl_{\C\PP^m}^{\zar}(S)_p$ has exactly two irreducible components that are conjugated, that is, $\sigma_m(\Pi(\C\PP^1_{[1:{\tt i}]}))=\Pi(\sigma_1(\C\PP^1_{[1:{\tt i}]}))=\Pi(\C\PP^1_{[1:-{\tt i}]})$, because $\Pi$ is invariant. In this case $\Pi(\R\PP^1)=\cl_{\R\PP^m}^{\zar}(S)_{(1)}\subset\R^m$ (see \S\ref{nac}($\bullet$)) is compact by Lemma \ref{infty2}.

\noindent{\sc Case 3.} $\Pi^{-1}(\cl_{\C\PP^m}^{\zar}(S)\cap\mathsf{H}^m_\infty(\C))=\{[1:{\tt i}],[1:-{\tt i}]\}$ and $q:=\Pi([1:{\tt i}])\neq\Pi([1:-{\tt i}])=\ol{q}$. Then $\cl_{\C\PP^m}^{\zar}(S)\cap\mathsf{H}^m_\infty(\C)=\{q,\ol{q}\}$ and $q,\ol{q}\not\in\mathsf{H}^m_\infty(\R)$. Observe that $\cl_{\C\PP^m}^{\zar}(S)_q=\Pi(\C\PP^1_{[1:{\tt i}]})$ and $\cl_{\C\PP^m}^{\zar}(S)_{\ol{q}}=\Pi(\C\PP^1_{[1:-{\tt i}]})$ are irreducible and conjugated, because $\Pi$ is invariant, so 
$$
\sigma_m(\cl_{\C\PP^m}^{\zar}(S)_q)=\sigma_m(\Pi(\C\PP^1_{[1:{\tt i}]}))=\Pi(\sigma_1(\C\PP^1_{[1:{\tt i}]})))=\Pi(\C\PP^1_{[1:-{\tt i}]})=\cl_{\C\PP^m}^{\zar}(S)_{\ol{q}}.
$$
In this case $\Pi(\R\PP^1)=\cl_{\R\PP^m}^{\zar}(S)_{(1)}\subset\R^m$ (see \S\ref{nac}($\bullet$)) is compact, because $\cl_{\R\PP^m}^{\zar}(S)\cap\mathsf{H}^m_\infty(\R)=\varnothing$.

It remains to show for {\sc Cases} 2 and 3 that $S=\cl_{\R\PP^m}^{\zar}(S)_{(1)}$. We have already proved in both cases that $\Pi(\R\PP^1)=\cl_{\R\PP^m}^{\zar}(S)_{(1)}\subset\R^m$ is compact, so $\cl_{\C\PP^m}^{\zar}(S)\cap\R^m=\cl_{\R\PP^m}^{\zar}(S)_{(1)}\cup(\cl_{\R\PP^m}^{\zar}(S)_{(0)}\cap\R^m)$ is a compact set. As $S=g(\sph^1)\subset\cl_{\R\PP^m}^{\zar}(S)\cap\R^m$ is a pure dimensional semialgebraic set of dimension $1$, we deduce $S\subset\cl_{\R\PP^m}^{\zar}(S)_{(1)}$.

Denote $Z:=\cl_{\C\PP^m}^{\zar}(S)\cap\C^m$, which is an irreducible algebraic set of $\C^m$ of dimension $1$, and let $\rho:\C^m\to\C^2$ be an invariant generic projection such that $Z':=\rho(Z)\subset\C^2$ is an algebraic curve and the restriction $\rho|_{Z}:Z\to Z'$ is (surjective and) generically $1$-$1$. To construct such a projection use Finiteness of Noether's normalization \cite[Thm.1.5.19]{jp} and algebraicity of generic projections \cite[Lem.2.1.6, Thm.2.2.8]{jp}. As $\rho|_Z$ is invariant and generically $1$-$1$, only finitely many points of $Z\setminus\R^m$ are mapped onto $Z'\cap\R^2$ (because conjugated points have conjugated images). Thus, as $Z\cap\R^m=\cl_{\C\PP^m}^{\zar}(S)\cap\R^m$ is a compact set and $(Z'\cap\R^2)_{(1)}\setminus\rho(Z\setminus\R^m)$ is dense in the pure dimensional semialgebraic set $(Z'\cap\R^2)_{(1)}$, we deduce $(Z'\cap\R^2)_{(1)}\subset\rho(Z\cap\R^m)\subset Z'\cap\R^2$. Consequently, $(Z'\cap\R^2)_{(1)}$ is a compact set, so $Z'\cap\R^2$ is a compact (real) algebraic set. 

Consider the polynomial map $\rho\circ g:\R^2\to\R^2$ and let $\Gamma\in\C[\z,\z^{-1}]$ be a Laurent polynomial such that $\Gamma(\sph^1)=(\rho\circ g)(\sph^1)$ after identifying $\C$ with $\R^2$ (see \S\ref{pclp}). We have $\Gamma(\sph^1)=(\rho\circ g)(\sph^1)\subset\rho(Z\cap\R^m)\subset Z'\cap\R^2$, which is a compact set. By \cite[Thm.2.1]{ky} the difference $(Z'\cap\R^2)\setminus (\rho\circ g)(\sph^1)$ is a finite set (maybe empty), so $\rho(Z\cap\R^m)\setminus\rho(g(\sph^1))$ is also a finite set (maybe empty). As $\rho|_Z$ is generically $1$-$1$, we deduce that $(Z\cap\R^m)\setminus g(\sph^1)$ is a finite set, so $(Z\cap\R^m)_{(1)}\setminus g(\sph^1)$ is a finite set. As $\sph^1$ is compact and $(Z\cap\R^m)_{(1)}$ is pure dimensional of dimension $1$, we conclude: $S=g(\sph^1)=(Z\cap\R^m)_{(1)}=\cl_{\R\PP^m}^{\zar}(S)_{(1)}$.

(ii) $\Longrightarrow$ (i) As $\cl_{\C\PP^m}^{\zar}(S)$ is an invariant rational curve, there exists by Corollary \ref{cp1} an invariant normalization $\Pi:=[\Pi_0:\cdots:\Pi_m]:\C\PP^1\to\cl_{\C\PP^m}^{\zar}(S)$, which is a surjective regular map. In particular, $\Pi(\R\PP^1)=\cl_{\R\PP^m}^{\zar}(S)_{(1)}$ (see \S\ref{nac}($\bullet$)). We distinguish three cases:

\noindent{\sc Case 1.} $\cl_{\C\PP^m}^{\zar}(S)\cap\mathsf{H}^m_\infty(\C)=\{p\}$ is a singleton and the analytic set germ $\cl_{\C\PP^m}^{\zar}(S)_p$ is irreducible. Thus, we may assume
$$
\{\Pi_0=0\}=\Pi^{-1}(\cl_{\C\PP^m}^{\zar}(S)\cap\mathsf{H}^m_\infty(\C))=\{{[0:1]}\}.
$$
Consequently, $\Pi_0=\lambda\t_0^d$ for some $d\geq1$ and some $\lambda\in\R\setminus\{0\}$ (recall that $\Pi$ is invariant), so we may assume $\lambda=1$. This means that the restriction 
$$
\Pi|_{\R}:\R\equiv\R\PP^1\setminus\{{[0:1]}\}\to\cl_{\C\PP^m}^{\zar}(S)\cap\R^m
$$ 
is a polynomial map. As $S$ is irreducible and $1$-dimensional, $S\subset\cl_{\R\PP^m}^{\zar}(S)_{(1)}\setminus\mathsf{H}^m_\infty(\R)=\Pi(\R)$ (the last equality holds, because $\Pi^{-1}(\cl_{\R\PP^m}^{\zar}(S)_{(1)}\cap\mathsf{H}^m_\infty(\R))=\{{[0:1]}\}$). As $S$ is irreducible, the one dimensional component $I$ of $\Pi^{-1}(S)$ is by \cite[Thm.3.15]{fg3} connected, $I\subset\R$ (because $\Pi^{-1}(\cl_{\C\PP^m}^{\zar}(S)\cap\mathsf{H}^m_\infty(\C))=\{{[0:1]}\}$) and $\Pi(I)=S$. As $S$ is compact and $\Pi$ is proper, also $I$ is compact, so $I$ is a compact interval. After an affine change of coordinates, we may assume $I=[-1,1]$. As $I$ is a polynomial image of $\sph^1$, the same happens to $S$.

\noindent{\sc Case 2.} $\cl_{\C\PP^m}^{\zar}(S)\cap\mathsf{H}^m_\infty(\C)=\{p\}$ is a singleton that belongs to $\mathsf{H}^m_\infty(\R)$, the analytic set germ $\cl_{\C\PP^m}^{\zar}(S)_p$ has exactly two irreducible components that are conjugated and $S=\cl_{\R\PP^m}^{\zar}(S)_{(1)}$.

\noindent{\sc Case 3.} $\cl_{\C\PP^m}^{\zar}(S)\cap\mathsf{H}^m_\infty(\C)=\{q,\ol{q}\}$ (where $q,\ol{q}\not\in\mathsf{H}^m_\infty(\R)$), both germs $\cl_{\C\PP^m}^{\zar}(S)_q$ and $\cl_{\C\PP^m}^{\zar}(S)_{\ol{q}}$ are irreducible and conjugated and $S=\cl_{\R\PP^m}^{\zar}(S)_{(1)}$. 

We prove both {\sc Cases 2} and 3 simultaneously. Recall that $\Pi:=[\Pi_0:\cdots:\Pi_m]:\C\PP^1\to\cl_{\C\PP^m}^{\zar}(S)$ is an invariant normalization of $\cl_{\C\PP^m}^{\zar}(S)$. Both in {\sc Cases 2} and 3 we may assume
$$
\{\Pi_0=0\}=\Pi^{-1}(\cl_{\C\PP^m}^{\zar}(S)\cap\mathsf{H}^m_\infty(\C))=\{[1:{\tt i}],[1:-{\tt i}]\}.
$$
As $\Pi$ is invariant, we deduce $\Pi_0:=\lambda(\x_0+{\tt i}\x_1)^p(\x_0-{\tt i}\x_1)^p=\lambda(\x_0^2+\x_1^2)^p$ for some integer $p\geq1$ and some $\lambda\in\R\setminus\{0\}$, so we may assume $\lambda=1$. As all the components of $\Pi$ are homogeneous polynomials of the same degree, we deduce that such degree is $2p$. In addition, by \S\ref{nac}($\bullet$) we have $\Pi(\R\PP^1)=\cl_{\R\PP^m}^{\zar}(S)_{(1)}=S$. 

Consider the regular map
$$
\Psi:\{\x_1^2+\x_2^2-\x_0^2=0\}\to\C\PP^1,\ [x_0:x_1:x_2]\mapsto[x_1:x_2],
$$
which is surjective, it is well-defined and $\Psi(\sph^1)=\R\PP^1$. Define 
$$
F:=[F_0:\cdots:F_m]=\Pi\circ\Psi:\{\x_1^2+\x_2^2-\x_0^2=0\}\to\C\PP^m
$$ 
and observe that $F_0=(\x_1^2+\x_2^2)^p=\x_0^{2p}$ on the set $\{\x_1^2+\x_2^2-\x_0^2=0\}$, so we may assume $F_0=\x_0^{2p}$. As $\sph^1=\{\x_1^2+\x_2^2-\x_0^2=0,\x_0=1\}$, the restriction map 
$$
f:=F|_{\sph^1}:=[1:F_1:\ldots:F_m]:\sph^1\to\R^m
$$ 
is the restriction of a polynomial map to $\sph^1$ and $f(\sph^1)=\Pi(\R\PP^1)=S$, as required.
\qed

\begin{remark}\label{degree}
The invariant regular map $\widetilde{F}:=(\widetilde{F}_0,\widetilde{F}_1):\C\PP^1\to\C\PP^1$ of \eqref{cp1cp1} satisfies
$$
\begin{cases}
\widetilde{F}_0={\tt i}\frac{\lambda_1}{2}(\x_0+{\tt i}\x_1)^{k_1}-{\tt i}\frac{\ol{\lambda_1}}{2}(\x_0-{\tt i}\x_1)^{k_1}\\
\widetilde{F}_1=\frac{\lambda_1}{2}(\x_0+{\tt i}\x_1)^{k_1}+\frac{\ol{\lambda_1}}{2}(\x_0-{\tt i}\x_1)^{k_1}
\end{cases}
$$
We have $\widetilde{F}|_{\R\PP^1}:\R\PP^1\to\R\PP^1,\ [x_0:x_1]\mapsto[-\Im(\lambda_1(x_0+{\tt i}x_1)^{k_1}):\Re(\lambda_1(x_0+{\tt i}x_1)^{k_1})]$ and we may assume that $\lambda_1\ol{\lambda_1}=1$. We refer the reader to \cite[Ch.VIII.\S2]{ma} for the concept and main properties of the {\em topological degree of a continuous map} $f:\sph^1\to\sph^1$. Consider the regular maps $\psi:\sph^1\to\R\PP^1,\ (x,y)\mapsto[x:y]$, which has topological degree $2$, $\phi:\R\PP^1\to\sph^1,\ [t_0:t_1]\mapsto(\frac{t_1^2-t_0^2}{t_0^2+t_1^2},-\frac{2t_0t_1}{t_0^2+t_1^2})$ and the composition 
\begin{multline*}
\phi\circ\widetilde{F}|_{\R\PP^1}\circ\psi:\sph^1\to\sph^1,\ (x,y)\equiv x+{\tt i}y=:z\mapsto\lambda_1^2z^{2k_1}=\lambda_1^2(x+{\tt i}y)^{2k_1}\\
\equiv(\Re(\lambda_1^2(x_0+{\tt i}x_1)^{2k_1}),\Im(\lambda_1^2(x_0+{\tt i}x_1)^{2k_1}),
\end{multline*}
which has topological degree $2k_1$. Consequently, $\widetilde{F}|_{\R\PP^1}:\R\PP^1\to\R\PP^1$ has topological degree $k_1\geq1$.
$ $\hfill$\sqbullet$
\end{remark}

\begin{proof}[Proof of Corollary \em\ref{ps1c}]
It is enough to apply Theorem \ref{ps1} for $m=2$ using the equivalence between the restrictions to $\sph^1$ of Laurent polynomials in $\C[\z,\z^{-1}]$ and polynomial maps $\R^2\to\R^2$ already seen in \S\ref{pclp}. 
\end{proof}

\begin{proof}[Proof of Proposition \em\ref{ps2}]
The implication (i) $\Longrightarrow$ (ii) is clear, so let us prove (ii) $\Longrightarrow$ (iii). Let $k\geq2$ and let $f:\R^{k+1}\to\R^m$ be a polynomial map such that $f(\sph^k)=S$. Let $F:\C^{k+1}\to\C^m$ be the (invariant) polynomial extension of $f$ to $\C^{k+1}$. Let $Y:=\{\x_1^2+\cdots+\x_{k+1}^2-\x_0^2=0\}\subset\C\PP^{k+1}$ be the Zariski closure of $\sph^k$ in $\C\PP^{k+1}$, which is a non-singular complex projective algebraic set, and let $X:=\cl_{\C\PP^m}^{\zar}(S)$, which is irreducible, because $S$ is by \cite[(3.1)(iv)]{fg3} an irreducible semialgebraic set. As $F$ is continuous for the Zariski topology, $F$ is a polynomial map and $F(\sph^k)=S$, we deduce $F(Y\cap\C^{k+1})\subset X\cap\C^m$. By Chevalley's elimination Theorem \cite[Thm.(2.31)]{m1} $F(Y\cap\C^{k+1})$ is an invariant constructible subset of $X\cap\C^m$. As $S$ has (real) dimension $1$, the intersection $X\cap\C^m$ has (complex) dimension $1$. As $X$ is irreducible, there exists an invariant (non-empty) finite set $E\subset\C\PP^m$ such that $X\cap\mathsf{H}^m_\infty(\C)\subset E$ and $F(Y\cap\C^{k+1})=X\setminus E\subset\C^m$. 

Let $H\subset\R^{k+1}$ be any $2$-dimensional plane through the origin. Then $H\cap\sph^k$ is a circle of center the origin and radius $1$ and there exists an affine isomorphism $\eta:\sph^1\to H\cap\sph^k$. As $F$ is non-constant (and for each pair of points of $\sph^k$ there exists a two dimensional plane through such points and the origin), there exists $H$ such that $f|_{H\cap\sph^k}:H\cap\sph^k\to S$ is non-constant, so $T:=f(H\cap\sph^k)$ is a one dimensional semialgebraic subset of $S$. As $S$ and $T$ are by \cite[(3.1)(iv)]{fg3} irreducible, both have dimension $1$ and $T\subset S$, we deduce $X=\cl_{\C\PP^m}^{\zar}(S)=\cl_{\C\PP^m}^{\zar}(T)$. As $T$ is the image of $\sph^1$ under $g:=f\circ\eta$, we conclude by Theorem \ref{ps1} that the Zariski closure $X=\cl_{\C\PP^m}^{\zar}(T)$ is an invariant rational curve such that one of the following three cases hold:
\begin{itemize}
\item[(1)] $\cl_{\C\PP^m}^{\zar}(T)\cap\mathsf{H}^m_\infty(\C)=\{p\}$ is a singleton (that belongs to $\mathsf{H}^m_\infty(\R)$) and the analytic set germ $\cl_{\C\PP^m}^{\zar}(T)_p$ is irreducible.
\item[(2)] $\cl_{\C\PP^m}^{\zar}(T)\cap\mathsf{H}^m_\infty(\C)=\{p\}$ is a singleton (that belongs to $\mathsf{H}^m_\infty(\R)$), the analytic set germ $\cl_{\C\PP^m}^{\zar}(T)_p$ has exactly two irreducible components that are conjugated, and $T=\cl_{\R\PP^m}^{\zar}(T)_{(1)}$.
\item[(3)] $\cl_{\C\PP^m}^{\zar}(T)\cap\mathsf{H}^m_\infty(\C)=\{q,\ol{q}\}$ (where $q,\ol{q}\not\in\mathsf{H}^m_\infty(\R)$), the analytic set germs $\cl_{\C\PP^m}^{\zar}(T)_q$ and $\cl_{\C\PP^m}^{\zar}(T)_{\ol{q}}$ are irreducible and conjugated, and $T=\cl_{\R\PP^m}^{\zar}(T)_{(1)}$.
\end{itemize}
Let us discard cases (2) and (3). In such cases $T\subset S\subset\cl_{\R\PP^m}^{\zar}(S)_{(1)}=\cl_{\R\PP^m}^{\zar}(T)_{(1)}=T$, so $T=S=\cl_{\R\PP^m}^{\zar}(S)_{(1)}$. 

Let $X:=\{\x_1^2+\x_2^2-\x_0^2=0\}\subset\C\PP^2$ and let $G:X\dasharrow\cl_{\C\PP^m}^{\zar}(T)$ be the regular extension of $g$ to $X$. Consider the parameterization $\Phi:\C\PP^1\to X,\ [\t_0:\t_1]\to[\t_0^2+\t_1^2:2\t_0\t_1:\t_1^2-\t_0^2]$ and the composition $P:=G\circ\Phi:\C\PP^1\to\cl_{\C\PP^m}^{\zar}(S)$. Let $\Pi:\C\PP^1\to X$ be an invariant normalization of $X$ and let $\widetilde{P}:\C\PP^1\to\C\PP^1$ be an invariant regular map such that $P=\Pi\circ\widetilde{P}$ (see the beginning of the proof of Theorem \ref{ps1} for further details concerning the construction of the previous regular maps). The restriction $\widetilde{P}|_{\R\PP^1}:\R\PP^1\to\R\PP^1$ is either $\id_{\R\PP^1}$ and has topological degree $1$ or $\widetilde{P}|_{\R\PP^1}\neq\id_{\R\PP^1}$ and has by Remark \ref{degree} topological degree $\geq1$.

By \S\ref{nac}($\bullet$) we have $\Pi(\R\PP^1)=\cl_{\R\PP^m}^{\zar}(S)_{(1)}=S$. As $\Pi:\C\PP^1\to X$ is a proper finite map and $E\neq\varnothing$ is invariant, we deduce that $E':=\Pi^{-1}(E)$ is also an invariant finite (non-empty) set and $\Pi:\C\PP^1\setminus E'\to X\setminus E$ is proper, finite and surjective. Thus, $\Pi|_{\C\PP^1\setminus E'}:\C\PP^1\setminus E'\to X\setminus E$ is the normalization of $X\setminus E$. As $Y\cap\C^{k+1}$ is non-singular, it is a normal affine complex algebraic set. By the universal property of normalization \cite[Ch.2.\S5.Thm.5(ii), pag.130]{sh1} there exists an invariant regular map $F^\bullet:Y\cap\C^{k+1}\to\C\PP^1\setminus E'$ such that $F|_{Y\cap\C^{k+1}}=\Pi|_{\C\PP^1\setminus E'}\circ F^\bullet$. Let $E:X\to Y$ be a regular extension of $\eta:\sph^1\to\sph^k$ and observe that $\Pi\circ\widetilde{P}=F\circ E\circ\Phi=\Pi\circ F^\bullet\circ E\circ\Phi$ outside a finite subset of $\C\PP^1$, so $\widetilde{P}=F^\bullet\circ E\circ\Phi$ outside a finite subset of $\C\PP^1$. We have $\widetilde{P}|_{\R\PP^1}=F^\bullet|_{\sph^k}\circ\eta\circ\Phi|_{\R\PP^1}:\R\PP^1\to\sph^1\to\sph^k\to\R\PP^1$. Fix a point $c\in\R\PP^1$ and consider the induced chain of homomorphisms between the homotopy groups (see \cite[Ch.II.\S4]{ma} for further details)
\begin{multline*}
(\widetilde{P}|_{\R\PP^1})_*=(F^\bullet|_{\sph^k})_*\circ\eta_*\circ(\Phi|_{\R\PP^1})_*:\\
\pi_1(\R\PP^1,c)\to\pi_1(\sph^1,\Phi(c))\to\pi_1(\sph^2,\eta(\Phi(c)))\to\pi_1(\R\PP^1,\widetilde{P}(c)).
\end{multline*}
As $\pi_1(\sph^2,\eta(\Phi(c)))=0$, we deduce $(\widetilde{P}|_{\R\PP^1})_*=0$, which is a contradiction, because $\widetilde{P}|_{\R\PP^1}$ has topological degree $\geq1$, as we have explained above.

Consequently, only case (1) is possible and assertion (iii) holds.

We finally check (iii) $\Longrightarrow$ (i). By Theorem \ref{pb1} there exists a polynomial map $g:\R\to\R^m$ such that $g([-1,1])=S$. The projection $\rho:\R^3\to\R,\ (x,y,z)\mapsto x$ satisfies $\rho(\sph^2)=[-1,1]$. Thus, the composition $f:=g\circ\rho:\R^3\to\R^m$ is a polynomial map such that $(g\circ\rho)(\sph^2)=g([-1,1])=S$, as required.
\end{proof}

\end{document}